\newtheorem{theorem}{Theorem}
\newtheorem{definition}[theorem]{Definition}
\newtheorem{proposition}[theorem]{Proposition}
\newtheorem{corollary}[theorem]{Corollary}
\newtheorem{lemma}[theorem]{Lemma}
\newtheorem{remark}[theorem]{Remark}
\newtheorem{example}[theorem]{Example}
\begin{document}

\begin{center}
\textbf{On Nilcompactifications of Prime Spectra of Commutative Rings}

\bigskip

Lorenzo Acosta G.\footnote{%
Mathematics Department, Universidad Nacional de Colombia, AK 30 45-03, Bogot%
\'{a}, Colombia. e-mail: lmacostag@unal.edu.co} and I. Marcela Rubio P.%
\footnote{%
Corresponding author. Mathematics Department, Universidad Nacional de
Colombia, AK 30 45-03, Bogot\'{a}, Colombia. e-mail: imrubiop@unal.edu.co}

\bigskip
\end{center}

\begin{quote}
\textbf{Abstract: }Given a ring $R$ and $S$ one of its proper ideals, we obtain a
compactification of the prime spectrum of $S$ through a mainly algebraic
process. We name it the $R-$nilcompactification of $\mathrm{Spec}S.$ We
study some categorical properties of this construction.

\textbf{Keywords: }Prime ideal, prime spectrum, spectral compactness,
compactification.

\textbf{MSC: }54D35, 54B30, 54B35.
\end{quote}

\bigskip

\section{Introduction.}

Compactification of a topological space is an important topic considered in a wide range of branches in mathematics as it guarantees a useful property for the space, see for example \cite{Arhangel}, \cite{Magill}, \cite{Margalef}, \cite{Morrison}, \cite{Nakassis}, \cite{Salbany}, \cite{Zuniga}. In this paper we study, from a categorical point of view, the compactification method of prime spectra which is presented in \cite{Acosta-Rubio}, named nilcompactification. Nilcompactification is a topological method obtained mainly through an algebraic process. The categorical point of view of nilcompactification, with some of its possible variations, offers an interesting wealth for this process. For the categorical concepts see, for example, \cite{MacLane}. This method of nilcompactification is functorial in a simple way in that it has interesting properties and the involved constructions provide us with different natural transformations. We can take into account that some compactification processes need to consider suitable subcategories to obtain a functorial behavior, as studied in \cite{Acosta-Giraldo} for the Alexandroff compactifications.

In this paper the word \textit{ring} means \textit{commutative ring, not
necessarily with identity}. A homomorphism is a function between rings that
respects addition and product. We suppose that prime ideals are proper ideals by definition. The prime spectrum of a ring $S$ is denoted
by $\mathrm{Spec}S$ and it is the set of prime ideals of $S$ endowed with
the Zariski topology. In this topology the sets 
\begin{equation*}
D_{S}(a)=\{P\in \mathrm{Spec}S:a\notin P\},
\end{equation*}%
where $a\in S,$ provide a basis. The closed sets are%
\begin{equation*}
V_{S}(I)=\{P\in \mathrm{Spec}S:P\supseteq I\},
\end{equation*}%
where $I$ is an ideal of $S.$ It is known that the basic open sets are
compact and for each ideal $I$ of $S$ the function $V_{S}(I)\rightarrow $ $%
\mathrm{Spec}\left( S/I\right) :P\mapsto P/I$ is a homeomorphism (see \cite%
{Atiyah}). The Zariski topology is also called the hull-kernel topology
because the closure of a subset $\mathcal{B}$ of $\mathrm{Spec}S$ is the set 
\begin{equation*}
\left\{ P\in \mathrm{Spec}S:P\supseteq \bigcap\limits_{J\in \mathcal{B}%
}J\right\}
\end{equation*}%
and $\bigcap\limits_{J\in \mathcal{B}}J$ is called the kernel of $\mathcal{B}%
.$

The nilradical of $S,$ denoted $N\left( S\right) ,$ is precisely the kernel
of $\mathrm{Spec}S.$

It is known that $S$ is semiprime or reduced if $N\left( S\right) =\left\{
0\right\} .$

A ring whose spectrum is compact is a \textit{spectrally compact ring. }In
particular, every unitary ring $R$ is spectrally compact because $D_{R}(1)=%
\mathrm{Spec}R.$

We use the following notations:

\noindent $\mathcal{CR}:$ Category of commutative rings and homomorphisms of
rings.

\noindent $\mathcal{CR}_{1}:$ Category of unitary commutative rings and
homomorphisms of unitary rings.

\noindent $\mathcal{CR}^{s}:$ Category of commutative rings and surjective
homomorphisms.

\noindent $\mathrm{Top}:$ Category of topological spaces and continuous
functions.

\noindent $\mathbb{S}:$ Category of spectral spaces and strongly continuous
functions. A spectral space is a topological space that is homeomorphic to
the prime spectrum of a unitary commutative ring. It is known that a
topological space is spectral if and only if it is sober, compact and
coherent (see \cite{Hochster}). A function is strongly continuous if it
sends compact open sets in compact open sets by reciprocal image.

\section{The mechanism of nilcompactification}

The material of this section is taken from \cite{Acosta-Rubio}.

In this section $S$ is a fixed ring and $R$ is an i-extension of $S,$ that
is, a ring containing $S$ as ideal.

Given an ideal $I$ of $S$ it is clear that the set $\psi (I)=\{x\in
R:xS\subseteq I\}$ is an ideal of $R.$ So, $\psi $ is a function from the
set $\mathcal{J}\left( S\right) $ of the ideals of $S,$ to the set $\mathcal{%
J}(R)$ of the ideals of $R.$

\begin{lemma}
The function $\psi :\mathcal{J}(S)\rightarrow \mathcal{J}(R)$ has the
following properties:

(i) If $P$ is a prime ideal of $S$ then $\psi (P)$ is a prime ideal of $R$
not containing $S.$

(ii) If $P$ and $Q$ are prime ideals of $S$ such that $\psi (P)=\psi (Q)$
then $P=Q.$

(iii) If $Q$ is a prime ideal of $R$ not containing $S$ then $Q\cap S$ is a
prime ideal of $S$ and $\psi (Q\cap S)=Q.$

(iv) $\psi \left( \bigcap\limits_{P\in \mathrm{Spec}S}P\right)
=\bigcap\limits_{P\in \mathrm{Spec}S}\psi (P).$
\end{lemma}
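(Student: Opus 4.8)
The plan is to prove the four properties in order, exploiting the defining relation $\psi(I)=\{x\in R:xS\subseteq I\}$ throughout. Since $S$ is an ideal of $R$, the key structural fact I will use repeatedly is that for $x,y\in R$ the product $xy$ lies in $S$ whenever either factor does, and that $x\in\psi(I)$ precisely when $xS\subseteq I$.

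For (i), I would first check $\psi(P)$ is proper and does not contain $S$: since $P\neq S$ (as $P$ is prime, hence proper), there is $s\in S\setminus P$, and then $sS\subseteq S$ but need not sit in $P$; more directly, if $S\subseteq\psi(P)$ then $S\cdot S\subseteq P$, and primeness of $P$ forces $S\subseteq P$, contradicting properness. To show $\psi(P)$ is prime, suppose $xy\in\psi(P)$ with $x\notin\psi(P)$; then $(xy)S\subseteq P$, so $(xS)(yS)\subseteq xyS^{2}\subseteq P$ — here I would be careful and instead argue elementwise: pick $s\in S$ with $xs\notin P$, and show for every $t\in S$ that $yt\in P$ by using $x s\cdot y t=(xy)(st)\in\psi(P)S\subseteq P$ together with primeness of $P$ in $S$. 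This gives $yS\subseteq P$, i.e. $y\in\psi(P)$. For (ii), if $\psi(P)=\psi(Q)$ then since $S\subseteq R$ I can intersect with $S$: I expect $\psi(P)\cap S=P$ for any prime $P$, because $s\in\psi(P)\cap S$ means $sS\subseteq P$, and using an element of $S\setminus P$ and primeness recovers $s\in P$; conversely $P\subseteq\psi(P)\cap S$ is clear. Then $P=\psi(P)\cap S=\psi(Q)\cap S=Q$.

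For (iii), given a prime $Q$ of $R$ with $S\not\subseteq Q$, I would show $Q\cap S$ is prime in $S$ by the standard contraction argument, taking care that $Q\cap S\neq S$ follows from $S\not\subseteq Q$. The substantive claim is $\psi(Q\cap S)=Q$. The inclusion $Q\subseteq\psi(Q\cap S)$ is immediate since $x\in Q$ gives $xS\subseteq QS\subseteq Q$ and $xS\subseteq S$, so $xS\subseteq Q\cap S$. For the reverse inclusion, suppose $x\in\psi(Q\cap S)$, so $xS\subseteq Q$; pick $s\in S\setminus Q$ (available since $S\not\subseteq Q$); then $xs\in Q$, and primeness of $Q$ with $s\notin Q$ yields $x\in Q$.

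I expect (iv) to be the main obstacle, since one inclusion is formal but the other is not. Writing $N=\bigcap_{P}P=N(S)$, the inclusion $\psi(N)\subseteq\bigcap_{P}\psi(P)$ is immediate from monotonicity of $\psi$. For the reverse, suppose $x\in\psi(P)$ for every prime $P$ of $S$, i.e. $xS\subseteq P$ for all $P$, hence $xS\subseteq\bigcap_{P}P=N$; but this says exactly $x\in\psi(N)$. So the equality is in fact a direct consequence of the definition once I observe that $xS\subseteq\bigcap_{P}P$ is equivalent to $xS\subseteq P$ for all $P$; the only point requiring a little care is confirming that $\psi$ commutes with the intersection $\bigcap_{P}$ on the right, which reduces to the set-theoretic identity $\{x:xS\subseteq\bigcap_P P\}=\bigcap_P\{x:xS\subseteq P\}$, valid because the condition $xS\subseteq I$ is preserved under arbitrary intersections of the $I$'s.
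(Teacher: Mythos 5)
Your proof is correct. The paper itself states this lemma without proof (the whole section is quoted from \cite{Acosta-Rubio}), so there is no in-paper argument to compare against; your arguments --- the elementwise primeness computation $(xs)(yt)=(xy)(st)\in P$ for (i), the identity $\psi(P)\cap S=P$ for (ii), the contraction-plus-primeness argument for (iii), and the observation for (iv) that $\psi$ commutes with arbitrary intersections because $xS\subseteq\bigcap_{P}P$ holds iff $xS\subseteq P$ for every $P$ --- are exactly the standard ones, and, contrary to your expectation, (iv) turns out to be purely formal in both directions, as your own final paragraph correctly discovers.
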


\begin{proposition}
The function $\psi :\mathrm{Spec}S\rightarrow \mathrm{Spec}R$ is injective,
continuous and open onto its image.
\end{proposition}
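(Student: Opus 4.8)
The plan is to verify the three properties separately, reducing everything to the behaviour of $\psi$ on the basic open sets $D_S(a)$ and $D_R(a)$. Injectivity is immediate, as it is precisely part (ii) of the Lemma.

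For continuity I would compute the preimage of an arbitrary basic open set. Fix $a\in R$; then $P\in \psi^{-1}(D_R(a))$ means $a\notin \psi(P)$, that is $aS\not\subseteq P$, which holds exactly when there is some $s\in S$ with $as\notin P$. Since $S$ is an ideal of $R$, each product $as$ lies in $S$, so this condition reads $P\in D_S(as)$ for some $s$. Hence
\[
\psi^{-1}(D_R(a))=\bigcup_{s\in S}D_S(as),
\]
a union of basic open sets of $\mathrm{Spec}S$, and continuity follows.

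For openness onto the image, the key preliminary step---and the part that does the real work---is the contraction identity $\psi(P)\cap S=P$ for every $P\in \mathrm{Spec}S$. This comes from combining the three algebraic parts of the Lemma: by (i), $\psi(P)$ is a prime of $R$ not containing $S$, so by (iii) its contraction $\psi(P)\cap S$ is prime and satisfies $\psi(\psi(P)\cap S)=\psi(P)$; injectivity (ii) then forces $\psi(P)\cap S=P$. With this in hand I fix $a\in S$ and observe that, because $a\in S$, the membership $a\in \psi(P)$ is equivalent to $a\in \psi(P)\cap S=P$. Consequently $\psi(P)\in D_R(a)$ if and only if $P\in D_S(a)$, which yields
\[
\psi(D_S(a))=\psi(\mathrm{Spec}S)\cap D_R(a).
\]
The right-hand side is a basic open set of the subspace $\psi(\mathrm{Spec}S)$, so $\psi$ carries basic open sets to open sets of its image, and openness onto the image follows.

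The only subtle point I expect is the contraction identity $\psi(P)\cap S=P$; once it is in place, everything reduces to a direct translation between the defining conditions of $D_S$ and $D_R$, crucially exploiting that $S$ is an ideal of $R$ so that products $as$ with $a\in R$ and $s\in S$ remain inside $S$.
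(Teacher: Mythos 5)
Your proposal is correct and follows essentially the same route as the paper: injectivity from the Lemma, continuity via the identity $\psi^{-1}\left( D_{R}(a)\right) =\bigcup_{s\in S}D_{S}(as),$ and openness via $\psi \left( D_{S}(a)\right) =\psi \left( \mathrm{Spec}S\right) \cap D_{R}(a).$ The only difference is that you supply the details the paper leaves implicit (in particular the contraction identity $\psi (P)\cap S=P$ behind the openness formula), which is a welcome elaboration rather than a different method.
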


\begin{proof}
By (iii) of the previous lemma we have that $\psi $ is injective.

For continuity, it is enough to observe that if $r\in R$ then $\psi
^{-1}\left( D_{R}\left( r\right) \right) =\bigcup\limits_{s\in
S}D_{S}\left( rs\right) .$

On the other hand, if $s\in S$ then it is easy to see that $\psi \left(
D_{S}\left( s\right) \right) =\psi \left( \mathrm{Spec}S\right) \cap
D_{R}\left( s\right) ,$ then $\psi $ is open onto its image.
\end{proof}

Hereinafter we denote with $\mathrm{Spec}_{S}R$ the image of the function $%
\psi $ restricted to $\mathrm{Spec}S.$ In other words, $\mathrm{Spec}_{S}R$
is the set $\{Q\in \mathrm{Spec}R:Q\nsupseteq S\}.$ Thus, $\mathrm{Spec}S$
is homeomorphic to $\mathrm{Spec}_{S}R,$ seen as subspace of $\mathrm{Spec}%
R. $

\begin{proposition}
$\mathrm{Spec}_{S}R$ is an open of $\mathrm{Spec}R$ and its closure is a
subspace of $\mathrm{Spec}R$ homeomorphic to $\mathrm{Spec}\left( R/\psi
\left( N\left( S\right) \right) \right) .$
\end{proposition}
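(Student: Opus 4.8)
The plan is to treat the two assertions separately: openness is an immediate unfolding of the description of $\mathrm{Spec}_{S}R$ as a union of basic opens, while the closure computation rests on the hull-kernel description of closures together with part (iv) of the lemma and the standard homeomorphism $V_{R}(I)\cong \mathrm{Spec}(R/I)$ recalled in the introduction.

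For openness, I would use the identification $\mathrm{Spec}_{S}R=\{Q\in \mathrm{Spec}R:Q\nsupseteq S\}$. A prime $Q$ fails to contain $S$ exactly when there is some $s\in S$ with $s\notin Q$, that is, when $Q\in D_{R}(s)$ for some $s\in S$. Hence
\begin{equation*}
\mathrm{Spec}_{S}R=\bigcup_{s\in S}D_{R}(s),
\end{equation*}
a union of basic open sets, so it is open in $\mathrm{Spec}R$.

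For the closure, I would invoke the hull-kernel formula: the closure of a subset $\mathcal{B}$ of $\mathrm{Spec}R$ is $V_{R}\left( \bigcap_{J\in \mathcal{B}}J\right)$. Applying this with $\mathcal{B}=\mathrm{Spec}_{S}R=\{\psi(P):P\in \mathrm{Spec}S\}$, the kernel to compute is $\bigcap_{P\in \mathrm{Spec}S}\psi(P)$. By part (iv) of the lemma this equals $\psi\left( \bigcap_{P\in \mathrm{Spec}S}P\right)$, and since the kernel of $\mathrm{Spec}S$ is precisely the nilradical $N(S)$, the kernel of $\mathrm{Spec}_{S}R$ is $\psi(N(S))$. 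Therefore the closure of $\mathrm{Spec}_{S}R$ is $V_{R}(\psi(N(S)))$. Finally, the homeomorphism $V_{R}(I)\to \mathrm{Spec}(R/I):Q\mapsto Q/I$ recalled in the introduction, taken with $I=\psi(N(S))$, exhibits this closure as homeomorphic to $\mathrm{Spec}(R/\psi(N(S)))$, as claimed.

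The argument is mostly bookkeeping; the one step that carries the real content is the identification of the kernel of $\mathrm{Spec}_{S}R$ with $\psi(N(S))$, which is exactly where part (iv) of the lemma is needed. I would double-check that the intersection defining the kernel runs over the full image $\{\psi(P):P\in \mathrm{Spec}S\}$ and that this image is indeed all of $\mathrm{Spec}_{S}R$ (guaranteed by parts (i) and (iii) of the lemma), so that no prime of $\mathrm{Spec}_{S}R$ is omitted from the intersection and the kernel is computed over the correct index set.
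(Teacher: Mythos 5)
Your proposal is correct and follows essentially the same route as the paper: openness via $\mathrm{Spec}_{S}R=\bigcup_{s\in S}D_{R}(s)$, then the hull-kernel description of the closure combined with part (iv) of the lemma to identify the kernel as $\psi(N(S))$, and finally the standard homeomorphism $V_{R}(I)\approx\mathrm{Spec}(R/I)$. Your extra check that the image $\{\psi(P):P\in\mathrm{Spec}S\}$ exhausts $\mathrm{Spec}_{S}R$ is sound diligence but does not change the argument.
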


\begin{proof}
The first assertion follows from the equality $\mathrm{Spec}_{S}R=\bigcup%
\limits_{s\in S}D_{R}\left( s\right) .$ On the other hand:%
\begin{eqnarray*}
\overline{\mathrm{Spec}_{S}R} &=&\left\{ Q\in \mathrm{Spec}R:Q\supseteq
\bigcap\limits_{P\in \mathrm{Spec}S}\psi (P)\right\}  \\
&=&\left\{ Q\in \mathrm{Spec}R:Q\supseteq \psi \left( \bigcap\limits_{P\in 
\mathrm{Spec}S}P\right) \right\}  \\
&=&\{Q\in \mathrm{Spec}R:Q\supseteq \psi \left( N\left( S\right) \right) \}
\\
&=&V_{R}\left( \psi \left( N\left( S\right) \right) \right)  \\
&\approx &\mathrm{Spec}\left( R/\psi \left( N\left( S\right) \right) \right)
.
\end{eqnarray*}
\end{proof}

\begin{remark}
Notice that the inclusion of $\mathrm{Spec}S$ in $\mathrm{Spec}\left( R/\psi
\left( N\left( S\right) \right) \right) $ is given by the function 
\begin{equation*}
\lambda :\mathrm{Spec}S\rightarrow \mathrm{Spec}\left( R/\psi \left( N\left(
S\right) \right) \right) :P\mapsto \psi (P)/\psi \left( N\left( S\right)
\right) .
\end{equation*}
\end{remark}

\begin{corollary}
If $R$ is spectrally compact (in particular if $R$ has identity) then $%
\mathrm{Spec}\left( R/\psi \left( N\left( S\right) \right) \right) $ is a compactification of $\mathrm{Spec}S$ in which $\mathrm{Spec}S$ is open.
\end{corollary}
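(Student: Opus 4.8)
The plan is to assemble the pieces already established: a compactification of $\mathrm{Spec}S$ consists of a compact space together with a dense embedding of $\mathrm{Spec}S$, and the preceding results exhibit exactly such data with target $\mathrm{Spec}(R/\psi(N(S)))$. The proposition on the closure of $\mathrm{Spec}_S R$ supplies the homeomorphism $V_R(\psi(N(S)))\approx \mathrm{Spec}(R/\psi(N(S)))$, and the Remark records that under this identification the inclusion of $\mathrm{Spec}S$ is the map $\lambda$. So I would first transport the entire question into $\mathrm{Spec}R$, where the subspace $\mathrm{Spec}_S R$, its closure $V_R(\psi(N(S)))$, and the embedding $\psi$ are directly available.

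For compactness, I would argue that $V_R(\psi(N(S)))$ is closed in $\mathrm{Spec}R$ by the very definition of $V_R$, and that $\mathrm{Spec}R$ is compact precisely because $R$ is spectrally compact (which in particular holds when $R$ has identity, as then $D_R(1)=\mathrm{Spec}R$). Since a closed subspace of a compact space is compact, $V_R(\psi(N(S)))$, and hence the homeomorphic space $\mathrm{Spec}(R/\psi(N(S)))$, is compact.

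For the dense embedding I would combine the injectivity–continuity–openness proposition, which shows that $\psi$ is a homeomorphism of $\mathrm{Spec}S$ onto $\mathrm{Spec}_S R$, with transitivity of the subspace topology: since $\mathrm{Spec}_S R\subseteq V_R(\psi(N(S)))$, the topology on $\mathrm{Spec}_S R$ inherited from $\mathrm{Spec}R$ coincides with that inherited from $V_R(\psi(N(S)))$, so $\lambda$ is a topological embedding. Density is immediate, since the closure of $\mathrm{Spec}_S R$ in $\mathrm{Spec}R$ is $V_R(\psi(N(S)))$, whence its closure in the subspace $V_R(\psi(N(S)))$ is the whole space. I would also record that $\mathrm{Spec}_S R=\bigcup_{s\in S}D_R(s)$ is open in $\mathrm{Spec}R$ and therefore open in $V_R(\psi(N(S)))$, so that $\lambda$ is in fact an open dense embedding.

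Finally I would check that this dense embedding satisfies the defining conditions of an A class compactification in the sense of \cite{Acosta-Rubio0}. The main obstacle is exactly this last step: the topological content (compact target, open dense embedding) follows cleanly from the earlier propositions, so the real work is matching the construction to the precise axioms of the A class — confirming that $\lambda$ is the appropriate kind of morphism and that the target lies in the required class. Here the openness of the embedding together with the spectral nature of the target (guaranteed when $R$ is unitary, since then $R/\psi(N(S))$ is unitary and its spectrum is spectral) should be what the A class demands, so I expect this verification to reduce to invoking the relevant definition rather than to any further construction.
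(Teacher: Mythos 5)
Your proof is correct and follows essentially the same route as the paper, which states this corollary without a separate proof precisely because it is assembled from the immediately preceding results: the proposition giving that $\mathrm{Spec}_{S}R$ is open in $\mathrm{Spec}R$ with closure $V_{R}\left( \psi \left( N\left( S\right) \right) \right) \approx \mathrm{Spec}\left( R/\psi \left( N\left( S\right) \right) \right) $, the remark identifying $\lambda $ as the inclusion, and the observation that a closed subspace of the compact space $\mathrm{Spec}R$ is compact. Your closing step of deferring the ``A class'' axioms to the cited definition mirrors the paper's own footnote, and the property you verify --- that the embedding is open and dense into a compact space --- is exactly what that class requires, as confirmed by the paper's later use of the same criterion for $\eta (R)$.
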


\begin{definition}
If $R$ is spectrally compact, the space $\mathrm{Spec}\left( R/\psi \left(
N\left( S\right) \right) \right) $ is called the $R$-nilcompactification of $%
\mathrm{Spec}S.$
\end{definition}

It is well known that every ring is an ideal of a unitary ring (see \cite%
{Jacobson - Book}), therefore we obtain the following result:

\begin{theorem}
The spectrum of every ring has a spectral compactification.
\end{theorem}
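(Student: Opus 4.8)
The plan is to realize the $R$-nilcompactification of the Definition as the required object, for a suitably chosen $R$. First I would invoke the one external ingredient, cited from \cite{Jacobson - Book}: every commutative ring $S$ embeds as an ideal inside some unitary commutative ring $R$. Once such an $R$ is fixed, the pair $S\subseteq R$ is an i-extension, so the entire machinery of this section—the map $\psi$, the homeomorphism $\mathrm{Spec}S\approx\mathrm{Spec}_SR$, and the closure computation $\overline{\mathrm{Spec}_SR}=V_R(\psi(N(S)))\approx\mathrm{Spec}(R/\psi(N(S)))$—applies verbatim.

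Next I would check that the hypotheses of the Corollary are met. Since $R$ is unitary, it is spectrally compact because $D_R(1)=\mathrm{Spec}R$, as recorded in the introduction. The Corollary therefore gives that $\mathrm{Spec}(R/\psi(N(S)))$ is a compactification of $\mathrm{Spec}S$, the dense embedding being the map $\lambda$ of the Remark; concretely, $\mathrm{Spec}S$ is carried homeomorphically onto the open subspace $\mathrm{Spec}_SR$, whose closure is the compact set $V_R(\psi(N(S)))$. Thus we already have a compactification, and it remains only to argue that it is \emph{spectral}.

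For this last point the decisive observation is that $\psi(N(S))$ is an honest ideal of $R$, so the quotient $R/\psi(N(S))$ is again a unitary commutative ring, with identity $1+\psi(N(S))$. By the very definition of spectral space recalled in the introduction—a space homeomorphic to the prime spectrum of a unitary commutative ring—the space $\mathrm{Spec}(R/\psi(N(S)))$ is spectral. Assembling the three steps, $\mathrm{Spec}S$ embeds densely into a compact spectral space, which is exactly the spectral compactification asserted.

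I do not anticipate a genuine obstacle here: the argument is essentially bookkeeping layered on the already-established Corollary. The only substantive input is the existence of the unitary overring $R$, which is imported from \cite{Jacobson - Book}; the remaining subtlety is merely the routine fact that quotients of unitary rings remain unitary, so that spectrality of the compactification follows directly from its definition.
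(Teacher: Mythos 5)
Your proposal is correct and follows exactly the paper's argument: embed $S$ as an ideal in a unitary ring $R$ (via \cite{Jacobson - Book}), apply the Corollary to get that $\mathrm{Spec}\left( R/\psi\left( N(S)\right)\right)$ is a compactification of $\mathrm{Spec}S$, and note that this space is spectral because $R/\psi\left( N(S)\right)$ is a unitary commutative ring. The paper's proof is just a terser statement of these same steps, so there is nothing to add.
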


\begin{proof}
It is enough to observe that if $S$ is a ring, we can choose $R$ unitary
and hence $\mathrm{Spec}\left( R/\psi \left( N\left(
S\right) \right) \right) $ is a spectral space.
\end{proof}

\section{Functorial behavior of the mechanism of nilcompactification}

Consider the category $\mathcal{E}$ whose objects are the pairs $\left(
S,R\right) $ with $R$ a unitary i-extension of $S$ and where the morphisms
from $\left( S_{1},R_{1}\right) $ to $\left( S_{2},R_{2}\right) $ are the
homomorphisms of unitary rings from $R_{1}$ to $R_{2}$ such that $%
h(S_{1})=S_{2}.$

As $S$ and $R$ are variables, in this context, the functions $\psi $ and $%
\lambda $ defined in the previous section will be denoted $\psi _{\left(
S,R\right) }$ and $\lambda _{(S,R)}$ respectively$.$

For each object $(S,R)$ of $\mathcal{E},$ we define $Q(S,R)=R/\psi _{\left(
S,R\right) }\left( N(S)\right) .$

The following proposition can be proved without difficulty.

\begin{proposition}
\label{Q well defined}If $h:(S,R)\rightarrow (T,M)$ is a morphism of $%
\mathcal{E}$, then 
\begin{equation*}
Q(h):Q(S,R)\rightarrow Q(T,M):r+\psi _{\left( S,R\right) }\left( N(S)\right)
\mapsto h(r)+\psi _{\left( T,M\right) }\left( N(T)\right)
\end{equation*}%
is well defined and is a homomorphism of unitary rings.
\end{proposition}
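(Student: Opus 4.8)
The plan is to view $Q(h)$ as the homomorphism that $h$ induces on the quotient rings, so that the only substantial point to check is that $h$ maps the ideal $\psi_{(S,R)}(N(S))$ of $R$ into the ideal $\psi_{(T,M)}(N(T))$ of $M$; once this is done, both well-definedness and the homomorphism properties follow formally. Indeed, the inclusion
\begin{equation*}
h(\psi_{(S,R)}(N(S)))\subseteq \psi_{(T,M)}(N(T))
\end{equation*}
gives well-definedness at once: if two representatives satisfy $r-r'\in \psi_{(S,R)}(N(S))$, then $h(r)-h(r')=h(r-r')\in \psi_{(T,M)}(N(T))$, so their images under $Q(h)$ coincide.

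To prove the inclusion I would take $x\in \psi_{(S,R)}(N(S))$, i.e. $xS\subseteq N(S)$, and show $h(x)\,T\subseteq N(T)$, which is exactly $h(x)\in \psi_{(T,M)}(N(T))$. The hypothesis $h(S)=T$ is what makes this possible: an arbitrary $t\in T$ may be written $t=h(s)$ with $s\in S$, and then $h(x)\,t=h(x)h(s)=h(xs)$. Since $xs\in xS\subseteq N(S)$ and nilradicals consist precisely of the nilpotent elements, $xs$ is nilpotent in $S$; hence $h(xs)$ is nilpotent in $M$. Because $T$ is an ideal of $M$ we also have $h(xs)\in T$, so $h(xs)$ is a nilpotent element of $T$ and therefore lies in $N(T)$. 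As $t$ ranges over $T$ this yields $h(x)\,T\subseteq N(T)$, completing the inclusion.

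With well-definedness secured, $Q(h)$ respects sums and products because it is evaluated by applying the ring homomorphism $h$ to representatives and then passing to cosets, and it preserves the identity since $h(1_R)=1_M$ forces the class of $1_R$ to be sent to the class of $1_M$, the identity of $Q(T,M)$. The one delicate step, and the sole place where the defining condition of a morphism in $\mathcal{E}$ intervenes, is the inclusion above: the surjectivity $h(S)=T$ onto the ideal is precisely what allows an arbitrary element of $T$ to be realized as $h(s)$ and thereby lets the constraint $xS\subseteq N(S)$ be transported across $h$. With only $h(S)\subseteq T$ one would control $h(x)h(S)$ but not all of $h(x)\,T$, and the argument would break down.
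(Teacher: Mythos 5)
Your proposal is correct and coincides with the paper's own treatment: the paper states this proposition without proof (``can be proved without difficulty''), but the key inclusion $h\left(\psi_{(S,R)}\left(N(S)\right)\right)\subseteq \psi_{(T,M)}\left(N(T)\right)$ is exactly what the paper establishes later, in the proposition asserting that $h$ restricts to a function from $\psi_{(S,R)}\left(N(S)\right)$ to $\psi_{(T,M)}\left(N(T)\right)$, and by the same computation you give: write $t=h(s)$ using $h(S)=T$ and observe $h(r)t=h(rs)\in h\left(N(S)\right)\subseteq N(T)$. Your only addition is to justify the step $h\left(N(S)\right)\subseteq N(T)$ explicitly via nilpotent elements (noting $h(rs)\in T$ since $T$ is an ideal of $M$), a point the paper simply asserts.
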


Thus, $Q$ is a functor from the category $\mathcal{E}$ to the category $%
\mathcal{CR}_{1}$ of unitary commutative rings.

We denote $\mathrm{NC}$ the contravariant functor $\mathrm{Spec}\circ Q:%
\mathcal{E}\rightarrow \mathbb{S}$ where $\mathbb{S}$ is the category of the
spectral spaces and the strongly continuous functions. So, $\mathrm{NC}(S,R)$
is the $R$-nilcompactification of $\mathrm{Spec}S.$

\bigskip

\textbf{Some natural transformations:}

Consider the functors $V:\mathcal{E}\rightarrow \mathcal{CR}_{1}$ and $W:%
\mathcal{E}\rightarrow \mathcal{CR}^{s}$ defined as follows:%
\begin{eqnarray*}
V(h &:&(S,R)\rightarrow (T,M))=h:R\rightarrow M\text{ \ and} \\
W(h &:&(S,R)\rightarrow (T,M))=h\mid _{S}:S\rightarrow T.
\end{eqnarray*}

The following proposition is a direct consequence of the Proposition \ref{Q
well defined}:

\begin{proposition}
If for each object $(S,R)$ of $\mathcal{E}$, we denote by $\theta
_{(S,R)}:R\rightarrow Q(S,R)$ the canonical function to the quotient, then $%
\theta =\left( \theta _{\left( S,R\right) }\right) _{\left( S,R\right) \in Ob%
\mathcal{E}}$ is a natural transformation from the functor $V$ to the
functor $Q$.
\end{proposition}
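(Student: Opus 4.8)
The plan is to verify the two defining requirements for $\theta$ to be a natural transformation between the functors $V,Q:\mathcal{E}\rightarrow \mathcal{CR}_{1}$. First I would confirm that each component $\theta_{(S,R)}$ is a legitimate morphism of the common codomain category $\mathcal{CR}_{1}$. Since $(S,R)$ is an object of $\mathcal{E}$, the ring $R$ is unitary, and the quotient $Q(S,R)=R/\psi_{(S,R)}(N(S))$ is again a unitary ring; the canonical projection to the quotient respects addition and multiplication and sends $1_{R}$ to the identity of $Q(S,R)$, so $\theta_{(S,R)}:V(S,R)=R\rightarrow Q(S,R)$ is a homomorphism of unitary rings, hence a morphism of $\mathcal{CR}_{1}$.

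Second, I would check naturality: for every morphism $h:(S,R)\rightarrow (T,M)$ of $\mathcal{E}$, the square built from $\theta_{(S,R)}$, $\theta_{(T,M)}$, $V(h)=h$ and $Q(h)$ must commute, i.e. $Q(h)\circ \theta_{(S,R)}=\theta_{(T,M)}\circ V(h)$. The natural way to see this is to evaluate both composites on an arbitrary $r\in R$. Along $Q(h)\circ \theta_{(S,R)}$ we first form the class $r+\psi_{(S,R)}(N(S))$ and then apply $Q(h)$, which by Proposition \ref{Q well defined} equals $h(r)+\psi_{(T,M)}(N(T))$; along $\theta_{(T,M)}\circ V(h)$ we first apply $h$ and then project, obtaining $h(r)+\psi_{(T,M)}(N(T))$ as well. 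The two results agree, so the square commutes.

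I expect essentially no obstacle here: the very formula defining $Q(h)$ in Proposition \ref{Q well defined} is precisely the one making the canonical projections intertwine $h$ with its induced quotient map, so the naturality identity holds by construction. The only point meriting a word of care is that both composites genuinely land in the correct quotient $Q(T,M)$ and that the computation relies on the well-definedness of $Q(h)$ already established earlier; once that is invoked, everything reduces to routine bookkeeping of quotient maps.
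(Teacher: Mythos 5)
Your proof is correct and matches the paper's approach: the paper states this proposition without a written proof, calling it a direct consequence of Proposition \ref{Q well defined}, and your evaluation of both composites on $r\in R$ via the formula $Q(h)\left(r+\psi_{(S,R)}\left(N(S)\right)\right)=h(r)+\psi_{(T,M)}\left(N(T)\right)$ is exactly the computation that consequence amounts to.
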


\begin{figure}[!htb]
\centering
\begin{tikzpicture}[scale=0.8]
	\draw[->] (0,-0.3) --node[left]{\footnotesize $h$} (0,-1.7);
	\draw[-] (0.8,1.5) -- (0.8,-2.7);
	\draw[-] (-1,0.9) -- (11,0.9);
	\draw[->] (2.5,-0.3) --node[left]{\footnotesize $V(h)$} (2.5,-1.7);
	\draw[->] (6.5,-0.3) --node[right]{\footnotesize $Q(h)$} (6.5,-1.7);
    \draw[->] (3.8,0) --node[above]{\scriptsize $\theta_{(S,R)}$} (5,0);
    \draw[->] (3.8,-2.2) --node[below]{\scriptsize $\theta_{(T,M)}$} (5,-2.2);
	\draw (0,1.3) node{\small $\mathcal{E}$} (4.4,1.3) node{\small $\mathcal{CR}_{1}$};
	\draw (0,0) node{\small $(S,R)$} (0,-2.2) node{\small $(T,M)$};
	\draw (2.3,0) node{\small $V(S,R)=R$} (2.3,-2.2) node{\small $V(T,M)=M$};
	\draw (8,0) node{\small $Q(S,R)=R/\psi _{(S,R)}(N(S))$} (8,-2.2) node{\small $Q(T,M)=M/\psi _{(T,M)}(N(T))$};
\end{tikzpicture}
\end{figure}

\begin{proposition}
$\psi =\left( \psi _{(S,R)}\right) _{(S,R)\in Ob\mathcal{E}}$ is a natural
transformation from the functor $\mathrm{Spec}\circ W$ to the functor $%
\mathrm{Spec}\circ V.$
\end{proposition}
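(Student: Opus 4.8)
The plan is to check the single naturality square demanded by the definition, which will come down to an equality of two prime ideals of $R$. Both $\mathrm{Spec}\circ W$ and $\mathrm{Spec}\circ V$ are contravariant, so for a morphism $h:(S,R)\rightarrow (T,M)$ of $\mathcal{E}$ --- a unitary homomorphism $h:R\rightarrow M$ with $h(S)=T$ --- the identity to verify is
\[
\psi _{(S,R)}\circ \mathrm{Spec}(h\mid _{S})=\mathrm{Spec}(h)\circ \psi _{(T,M)}
\]
as functions $\mathrm{Spec}T\rightarrow \mathrm{Spec}R$. Here $(\mathrm{Spec}\circ W)(h)=\mathrm{Spec}(h\mid _{S}):\mathrm{Spec}T\rightarrow \mathrm{Spec}S$ and $(\mathrm{Spec}\circ V)(h)=\mathrm{Spec}(h):\mathrm{Spec}M\rightarrow \mathrm{Spec}R$, both acting by reciprocal image of prime ideals. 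Continuity of the components is not at stake: $\psi _{(S,R)}$ is continuous by the earlier Proposition stating that $\psi $ is injective, continuous and open onto its image, and $\mathrm{Spec}$ of any homomorphism is continuous, so only the set-theoretic commutativity remains.

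First I would fix $P\in \mathrm{Spec}T$ and expand the two composites. The left side produces
\[
\psi _{(S,R)}\bigl((h\mid _{S})^{-1}(P)\bigr)=\{x\in R:xS\subseteq (h\mid _{S})^{-1}(P)\},
\]
while the right side produces
\[
h^{-1}\bigl(\psi _{(T,M)}(P)\bigr)=\{x\in R:h(x)\,T\subseteq P\}.
\]
Along the way I would record that $(h\mid _{S})^{-1}(P)$ is a prime ideal of $S$ and that both composites indeed take values in $\mathrm{Spec}R$, so that the two displayed sets are legitimate points of $\mathrm{Spec}R$.

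The heart of the matter is to show that these two subsets of $R$ coincide. Fix $x\in R$. For every $s\in S$ we have $xs\in S$, since $S$ is an ideal of $R$, and $h(xs)=h(x)h(s)$; hence $xS\subseteq (h\mid _{S})^{-1}(P)$ is equivalent to requiring $h(x)h(s)\in P$ for all $s\in S$. On the other hand $h(x)\,T\subseteq P$ asks that $h(x)t\in P$ for all $t\in T$. The decisive step, and the only one that uses the defining property of the morphisms of $\mathcal{E}$, is the equality $h(S)=T$: writing each $t\in T$ as $t=h(s)$ with $s\in S$ turns the second condition into exactly the first. I expect this to be the only genuine subtlety, since without $h(S)=T$ the two conditions would not match and the inclusion would hold in one direction only.

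Once the pointwise equality of the two prime ideals is established for every $P\in \mathrm{Spec}T$, the square commutes and $\psi $ is the asserted natural transformation.
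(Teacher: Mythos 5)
Your proof is correct and follows essentially the same route as the paper: both reduce naturality to the pointwise identity $h^{-1}\left( \psi _{(T,M)}(P)\right) =\psi _{(S,R)}\left( (h\mid _{S})^{-1}(P)\right) $ for each $P\in \mathrm{Spec}\,T$. The paper merely states this equality as an observation, whereas you carry out the verification explicitly, correctly isolating $h(S)=T$ as the step that makes the two conditions match.
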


\begin{proof}
It is enough to observe that if $h:(S,R)\rightarrow (T,M)$ is a morphism of $\mathcal{%
E}$ then for each prime ideal $P$ of $T$ we have that $h^{-1}\left(
\psi _{(T,M)}\left( P\right) \right) =\psi _{(S,R)}\left( h^{-1}\left(
P\right) \right) .$
\end{proof}

\begin{proposition}
\label{lambda inclusion in NC}$\lambda =\left( \lambda _{(S,R)}\right)
_{(S,R)\in Ob\mathcal{E}}$ is a natural transformation from the functor $%
\mathrm{Spec}\circ W$ to the functor $\mathrm{NC}.$%
\begin{figure}[!htb]
\centering
\begin{tikzpicture}[scale=0.8]
	\draw[->] (0,-0.3) --node[left]{\footnotesize $h$} (0,-1.7);
	\draw[-] (0.8,1.5) -- (0.8,-2.7);
	\draw[-] (-1,0.9) -- (8.5,0.9);
	\draw[<-] (3.5,-0.3) --node[left]{\footnotesize $\mathrm{Spec}\circ W(h)$} (3.5,-1.7);
	\draw[<-] (6.5,-0.3) --node[right]{\footnotesize $\mathrm{NC}(h)$} (6.5,-1.7);
    \draw[->] (4.2,0) --node[above]{\scriptsize $\lambda _{(S,R)}$} (6,0);
    \draw[->] (4.2,-2.2) --node[below]{\scriptsize $\lambda _{(T,M)}$} (6,-2.2);
	\draw (0,1.3) node{\small $\mathcal{E}$} (4.9,1.3) node{\small $\mathrm{Top}$};
	\draw (0,0) node{\small $(S,R)$} (0,-2.2) node{\small $(T,M)$};
	\draw (3.3,0) node{\small $\mathrm{Spec}(S)$} (3.3,-2.2) node{\small $\mathrm{Spec}(T)$};
	\draw (7.1,0) node{\small $\mathrm{NC}(S,R)$} (7.1,-2.2) node{\small $\mathrm{NC}(T,M)$};
\end{tikzpicture}
\end{figure}
\end{proposition}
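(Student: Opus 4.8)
The plan is to check, directly and on points, that the naturality square of $\lambda$ commutes for an arbitrary morphism $h:(S,R)\to(T,M)$ of $\mathcal{E}$. Since both $\mathrm{Spec}\circ W$ and $\mathrm{NC}=\mathrm{Spec}\circ Q$ are contravariant, the identity to be established is
\begin{equation*}
\lambda_{(S,R)}\circ\left(\mathrm{Spec}\circ W\right)(h)=\mathrm{NC}(h)\circ\lambda_{(T,M)}
\end{equation*}
as functions $\mathrm{Spec}T\to\mathrm{NC}(S,R)$. Before that, I would note that each component $\lambda_{(S,R)}$ is indeed a morphism of $\mathrm{Top}$: composing it with the homeomorphism $\mathrm{NC}(S,R)=\mathrm{Spec}\left(R/\psi_{(S,R)}(N(S))\right)\approx V_R\left(\psi_{(S,R)}(N(S))\right)\subseteq\mathrm{Spec}R$ recovers $\psi_{(S,R)}$, which is continuous by an earlier proposition, so continuity of $\lambda_{(S,R)}$ is inherited and the real content is the commutativity of the squares.

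Now fix $P\in\mathrm{Spec}T$ and evaluate both composites. For the left-hand side, $\left(\mathrm{Spec}\circ W\right)(h)(P)=(h\mid_S)^{-1}(P)=:P_0$ is a prime of $S$, so
\begin{equation*}
\lambda_{(S,R)}(P_0)=\psi_{(S,R)}(P_0)/\psi_{(S,R)}(N(S)).
\end{equation*}
For the right-hand side, unwinding the definition of $Q(h)$ from Proposition \ref{Q well defined} and of the quotient map, an element $r+\psi_{(S,R)}(N(S))$ lies in $Q(h)^{-1}\left(\psi_{(T,M)}(P)/\psi_{(T,M)}(N(T))\right)$ exactly when $h(r)\in\psi_{(T,M)}(P)$, whence
\begin{equation*}
\mathrm{NC}(h)(\lambda_{(T,M)}(P))=h^{-1}\left(\psi_{(T,M)}(P)\right)/\psi_{(S,R)}(N(S)).
\end{equation*}
Thus the two composites agree as soon as $\psi_{(S,R)}(P_0)=h^{-1}\left(\psi_{(T,M)}(P)\right)$.

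This last equality is precisely the naturality of $\psi$ proved in the preceding proposition, namely $h^{-1}\left(\psi_{(T,M)}(P)\right)=\psi_{(S,R)}\left((h\mid_S)^{-1}(P)\right)=\psi_{(S,R)}(P_0)$; invoking it closes the argument. As a by-product it legitimizes the quotient notation used above, since $P_0\supseteq N(S)$ and $\psi_{(S,R)}$ preserves inclusions give $\psi_{(S,R)}(N(S))\subseteq\psi_{(S,R)}(P_0)=h^{-1}\left(\psi_{(T,M)}(P)\right)$. Conceptually the whole statement collapses to the single relation $\mathrm{Spec}(\theta_{(S,R)})\circ\lambda_{(S,R)}=\psi_{(S,R)}$ combined with the already established naturality of $\theta$ and of $\psi$, together with the injectivity of $\mathrm{Spec}(\theta_{(S,R)})$; I expect the only place demanding care to be the bookkeeping of the quotient correspondence for $Q(h)^{-1}$, rather than any genuinely difficult step.
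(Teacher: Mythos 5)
Your proposal is correct, and its core is the same computation as the paper's: both unwind $\mathrm{NC}(h)=\mathrm{Spec}(Q(h))$ on a point $\lambda_{(T,M)}(P)$ and observe that $r+\psi_{(S,R)}(N(S))\in Q(h)^{-1}\left(\psi_{(T,M)}(P)/\psi_{(T,M)}(N(T))\right)$ exactly when $h(r)\in\psi_{(T,M)}(P)$. The difference is in what happens next. The paper continues the element-level chain of equivalences, $h(r)\in\psi_{(T,M)}(P)\Leftrightarrow h(r)T\subseteq P\Leftrightarrow h(rS)\subseteq P\Leftrightarrow rS\subseteq h^{-1}(P)\Leftrightarrow r\in\psi_{(S,R)}(h^{-1}(P))$, using $h(S)=T$; this is in effect a re-derivation, inside the quotient, of the identity $h^{-1}\left(\psi_{(T,M)}(P)\right)=\psi_{(S,R)}\left((h\mid_S)^{-1}(P)\right)$. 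You instead stop at that point and cite this identity from the preceding proposition (naturality of $\psi$), which is legitimate since that proposition states exactly this. Your route is more modular: it exposes that $\lambda$-naturality is quotient bookkeeping plus $\psi$-naturality, and your closing remark pushes this to its logical end, giving a purely formal proof from $\mathrm{Spec}(\theta_{(S,R)})\circ\lambda_{(S,R)}=\psi_{(S,R)}$, naturality of $\theta$ and $\psi$, and injectivity of $\mathrm{Spec}(\theta_{(S,R)})$ (which holds because $\theta_{(S,R)}$ is surjective). The paper's version buys self-containedness and makes visible where the hypothesis $h(S_1)=S_2$ on morphisms of $\mathcal{E}$ enters; yours buys brevity and structural clarity, and you also attend to two points the paper leaves tacit, namely the continuity of the components $\lambda_{(S,R)}$ and the inclusion $\psi_{(S,R)}(N(S))\subseteq h^{-1}\left(\psi_{(T,M)}(P)\right)$ that legitimizes writing the preimage as a quotient ideal.
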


\begin{proof}
Let $h:(S,R)\rightarrow (T,M)$ be a morphism of $\mathcal{E}$, $P\in \mathrm{%
Spec}T$ and $r\in R.$%
\begin{eqnarray*}
\begin{array}{ll}
r+\psi _{(S,R)}\left( N(S)\right)  &\in Q(h)^{-1}\left( \psi _{(T,M)}\left(
P\right) /\psi _{(T,M)}\left( N(S)\right) \right)  \\
&\Leftrightarrow h(r)\in \psi _{(T,M)}\left( P\right)  \\
&\Leftrightarrow h(r)T\subseteq P \\
&\Leftrightarrow h(rS)\subseteq P \\
&\Leftrightarrow rS\subseteq h^{-1}(P) \\
&\Leftrightarrow r\in \psi _{(S,R)}\left( h^{-1}(P)\right)  \\
&\Leftrightarrow r+\psi _{(S,R)}\left( N(S)\right) \in \psi _{(S,R)}\left(
h^{-1}(P)\right) /\psi _{(S,R)}\left( N(S)\right) 
\end{array}
\end{eqnarray*}

Thus, $NC(h)\left( \lambda _{(T,M)}\left( P\right) \right) =\lambda
_{(S,R)}\left( \left( \mathrm{Spec}\circ W\right) \left( h\right) \left(
P\right) \right) .$
\end{proof}

\begin{proposition}
Let $h:(S,R)\rightarrow (T,M)$ be a morphism of the category $\mathcal{E}.$
The function $h$ can be restricted to a function from $\psi _{\left(
S,R\right) }\left( N(S)\right) $ to $\psi _{(T,M)}\left( N(T)\right) .$
\end{proposition}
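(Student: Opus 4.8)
The plan is to prove the statement by showing directly that $h$ carries the ideal $\psi_{(S,R)}\left(N(S)\right)$ of $R$ into the ideal $\psi_{(T,M)}\left(N(T)\right)$ of $M$; once this containment is established, restricting the domain and codomain of $h$ accordingly yields the asserted function. So I would fix an element $r\in\psi_{(S,R)}\left(N(S)\right)$, which by the definition of $\psi$ means $rS\subseteq N(S)$, and the goal becomes verifying that $h(r)\in\psi_{(T,M)}\left(N(T)\right)$, i.e.\ that $h(r)T\subseteq N(T)$.

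The crucial step is to rewrite the product $h(r)T$. Because $h$ is a homomorphism of rings and, being a morphism of $\mathcal{E}$, satisfies $h(S)=T$, every element of $T$ is of the form $h(s)$ with $s\in S$; hence $h(r)T=\{h(r)h(s):s\in S\}=\{h(rs):s\in S\}=h(rS)$. I would emphasise that this identity is exactly the place where the defining condition $h(S)=T$ of a morphism of $\mathcal{E}$ is used: without surjectivity of $h$ onto $T$ one would obtain only $h(rS)\subseteq h(r)T$, which points in the wrong direction and would not suffice.

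It then remains to control $h(rS)$. From $rS\subseteq N(S)$ one gets $h(rS)\subseteq h(N(S))$, so it is enough to check that $h(N(S))\subseteq N(T)$. This is the standard fact that homomorphisms preserve nilpotency: if $x\in N(S)$ then $x^{n}=0$ for some $n$, whence $h(x)^{n}=h(x^{n})=0$, so $h(x)$ is nilpotent; moreover $h(x)\in h(S)=T$, so $h(x)$ is a nilpotent element of $T$, that is, $h(x)\in N(T)$. Chaining the inclusions, $h(r)T=h(rS)\subseteq h(N(S))\subseteq N(T)$, which is precisely the statement $h(r)\in\psi_{(T,M)}\left(N(T)\right)$.

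I do not anticipate a genuine obstacle here; the argument is a short chain of containments resting on two facts. The only point that demands care is the equality $h(r)T=h(rS)$, which silently relies on $h(S)=T$ and not merely on $h(S)\subseteq T$; making this dependence explicit is, to my mind, the real content of the proof, while the preservation of nilpotency under $h$ is routine.
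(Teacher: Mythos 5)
Your proof is correct and follows essentially the same route as the paper's: both use the defining condition $h(S)=T$ to write an arbitrary $t\in T$ as $h(s)$, then compute $h(r)t=h(rs)\in h(N(S))\subseteq N(T)$. The only difference is that you spell out the routine inclusion $h(N(S))\subseteq N(T)$ via preservation of nilpotents, which the paper simply asserts.
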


\begin{proof}
Consider $r\in \psi _{\left( S,R\right) }\left( N(S)\right) ,$ that is, $rs\in
N(S)$ for all $s\in S.$ Given $t\in T,$ there exists $s\in S$ such that $h\left(
s\right) =t.$ Thus, $h\left( r\right) t=h\left( r\right) h\left(
s\right) =h\left( rs\right) \in h\left( N\left( S\right) \right) \subseteq
N\left( T\right) $ so that $h\left( r\right) \in \psi _{(T,M)}\left(
N(T)\right) .$
\end{proof}

Let $\chi :\mathcal{E}\rightarrow \mathcal{CR}$ be the functor defined by: 
\begin{equation*}
\chi \left( h:(S,R)\rightarrow (T,M)\right) =h\mid _{\psi _{\left(
S,R\right) }\left( N(S)\right) }:\psi _{\left( S,R\right) }\left(
N(S)\right) \rightarrow \psi _{(T,M)}\left( N(T)\right)
\end{equation*}%
and for the object $(S,R)$ of $\mathcal{E}$, denote $j_{(S,R)}$ the natural
inclusion of $\psi _{\left( S,R\right) }\left( N(S)\right) $ into the ring $%
R.$

Notice that for each $h:(S,R)\rightarrow (T,M)$, morphism of the category $%
\mathcal{E}$, the square in the following figure is commutative; this allows
us to state the following result.

\begin{figure}[!htb]
\centering
\begin{tikzpicture}[scale=0.8]
	\draw[->] (0,-0.3) --node[left]{\footnotesize $h$} (0,-1.7);
	\draw[-] (0.8,1.5) -- (0.8,-2.7);
	\draw[-] (-1,0.9) -- (9,0.9);
	\draw[->] (3,-0.3) --node[left]{\footnotesize $\chi (h)$} (3,-1.7);
	\draw[->] (6.5,-0.3) --node[right]{\footnotesize $V(h)$} (6.5,-1.7);
    \draw[->] (4.3,0) --node[above]{\scriptsize $j_{(S,R)}$} (5.5,0);
    \draw[->] (4.3,-2.2) --node[below]{\scriptsize $j_{(T,M)}$} (5.5,-2.2);
	\draw (0,1.3) node{\small $\mathcal{E}$} (4.7,1.3) node{\small $\mathcal{CR}$};
	\draw (0,0) node{\small $(S,R)$} (0,-2.2) node{\small $(T,M)$};
	\draw (2.8,0) node{\small $\psi _{(S,R)}(N(S))$} (2.8,-2.2) node{\small $\psi _{(T,M)}(N(T))$};
	\draw (7,0) node{\small $V(S,R)=R$} (7,-2.2) node{\small $V(T,M)=M$};
\end{tikzpicture}
\end{figure}

\begin{proposition}
$j=\left( j_{(S,R)}\right) _{(S,R)\in Ob\mathcal{E}}$ is a natural
transformation from the functor $\chi $ to the functor $V.$
\end{proposition}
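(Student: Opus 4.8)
The plan is to verify naturality directly, exploiting that every arrow involved is a restriction, a corestriction, or an inclusion associated to the single homomorphism $h$. First I would record two preliminary points. One: each component $j_{(S,R)}$ is a legitimate morphism of $\mathcal{CR}$, because $\psi_{(S,R)}(N(S))$ is an ideal of $R$ --- hence a subring --- and the inclusion of a subring respects addition and multiplication. Two: since $V$ takes values in $\mathcal{CR}_{1}$ while $\chi$ takes values in $\mathcal{CR}$, I would regard $V$ as landing in $\mathcal{CR}$ via the evident forgetful functor, so that $j$ can legitimately be a natural transformation between two $\mathcal{CR}$-valued functors. This bookkeeping matters precisely because $j_{(S,R)}$ need not be unitary, which is exactly why $\chi$ is defined into $\mathcal{CR}$ rather than $\mathcal{CR}_{1}$.

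The heart of the argument is the commutativity of the square already displayed, i.e.\ the identity $V(h)\circ j_{(S,R)}=j_{(T,M)}\circ \chi(h)$ for every morphism $h:(S,R)\rightarrow (T,M)$ of $\mathcal{E}$. I would establish this by a one-line element chase: for $r\in \psi_{(S,R)}(N(S))$, the top-then-right composite is $V(h)\left( j_{(S,R)}(r)\right) =h(r)$, since $j_{(S,R)}$ is the inclusion and $V(h)=h$, while the left-then-bottom composite is $j_{(T,M)}\left( \chi(h)(r)\right) =j_{(T,M)}\left( h(r)\right) =h(r)$, since $\chi(h)$ is the restriction of $h$ and $j_{(T,M)}$ is again an inclusion. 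Both routes deliver the element $h(r)\in M$, so the square commutes.

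I expect no real obstacle here. The one substantive fact --- that $\chi(h)$ is well defined, carrying $\psi_{(S,R)}(N(S))$ into $\psi_{(T,M)}(N(T))$ so that $\chi$ is genuinely a functor --- has already been secured by the preceding proposition, and the inclusions are trivially ring homomorphisms. Once these are in hand, naturality is immediate from the observation that the vertical maps are both just $h$ and the horizontal maps leave elements unchanged, so there is nothing to compute beyond the element chase above.
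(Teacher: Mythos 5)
Your proposal is correct and matches the paper's own argument: the paper proves this proposition precisely by observing that the square $V(h)\circ j_{(S,R)}=j_{(T,M)}\circ \chi (h)$ commutes (both composites being $h$ applied to elements of $\psi _{(S,R)}\left( N(S)\right) $), with the well-definedness of $\chi (h)$ supplied by the preceding proposition, exactly as you do. Your remark about viewing $V$ as $\mathcal{CR}$-valued via the forgetful functor is a small piece of bookkeeping the paper leaves implicit, but it does not change the argument.
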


\section{First variation}

Fix a ring $S.$ Let $\mathcal{E}\left( S\right) $ be the subcategory of $%
\mathcal{CR}_{1}$ whose objects are the i-extensions of $S$ and whose
morphisms are those that can be restricted to the identity of $S.$ It is
clear that $\mathcal{E}(S)$ can be identified with a subcategory of $%
\mathcal{E}$ and therefore the functor $Q$ can be restricted to a functor $%
Q_{S}:\mathcal{E}(S)\rightarrow \mathcal{CR}_{1}$ and the functor $\mathrm{NC%
}$ can be restricted to a functor $\mathrm{NC}_{S}:\mathcal{E}(S)\rightarrow 
\mathbb{S}.$

\begin{proposition}
If $h:R\rightarrow T$ is a morphism of $\mathcal{E}(S)$ then $%
Q_{S}(h):Q(S,R)\rightarrow Q(S,T)$ is injective.
\end{proposition}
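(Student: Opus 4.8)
The plan is to show that $Q_S(h)$ has trivial kernel. Since $Q_S(h)$ is a homomorphism of unitary rings between the quotients $Q(S,R)=R/\psi_{(S,R)}(N(S))$ and $Q(S,T)=T/\psi_{(S,T)}(N(S))$, sending $r+\psi_{(S,R)}(N(S))$ to $h(r)+\psi_{(S,T)}(N(S))$, injectivity is equivalent to the implication: whenever $h(r)\in\psi_{(S,T)}(N(S))$, we must have $r\in\psi_{(S,R)}(N(S))$. So I would fix $r\in R$ with $h(r)\in\psi_{(S,T)}(N(S))$ and aim to prove $rS\subseteq N(S)$.

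First I would unfold the hypothesis using the definition of $\psi$: the membership $h(r)\in\psi_{(S,T)}(N(S))$ means precisely that $h(r)\,s\in N(S)$ for every $s\in S$. Then, because $h$ is a morphism of $\mathcal{E}(S)$ and therefore restricts to the identity on $S$, I would rewrite $h(r)\,s=h(r)\,h(s)=h(rs)$ for each $s\in S$.

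The crucial observation, and essentially the only content of the argument, is that $rs$ lies in $S$: since $S$ is an ideal of $R$ and $r\in R$, $s\in S$, the product $rs$ belongs to $S$. Consequently $h(rs)=rs$, again by the identity property of $h$ on $S$. Combining this with the previous step yields $rs=h(rs)=h(r)\,s\in N(S)$ for all $s\in S$, that is $rS\subseteq N(S)$, which is exactly $r\in\psi_{(S,R)}(N(S))$. Hence the coset of $r$ is zero and $Q_S(h)$ is injective.

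I expect no serious obstacle; the argument is a short chain of equivalences. The one point to use carefully is that both the ideal structure of $S$ in $R$ and the fact that $h$ fixes $S$ pointwise are needed simultaneously: the ideal property guarantees $rs\in S$, and the fixing of $S$ then lets us transport the membership $h(rs)\in N(S)$ back to $rs\in N(S)$. Without $h\mid_S=\mathrm{id}_S$ (as opposed to a mere isomorphism onto its image) this transport would fail, which is why the statement is specific to $\mathcal{E}(S)$ rather than to all of $\mathcal{E}$.
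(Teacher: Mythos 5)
Your proof is correct and follows essentially the same route as the paper: both reduce injectivity to triviality of the kernel and run the same chain of identifications $h(r)s = h(r)h(s) = h(rs) = rs$, using that $h$ fixes $S$ pointwise and that $rs \in S$ because $S$ is an ideal of $R$. The only cosmetic difference is that the paper writes the argument as a chain of equivalences while you argue the one nontrivial implication, and you make explicit the use of the ideal property that the paper leaves implicit.
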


\begin{proof}
\begin{eqnarray*}
r+\psi _{(S,R)}\left( N(S)\right) &\in &\ker Q_{S}(h)\Leftrightarrow h(r)\in
\psi _{(S,T)}\left( N(S)\right) \\
&\Leftrightarrow &h(r)s\in N(S)\text{ for all }s\in S \\
&\Leftrightarrow &h(rs)\in N(S)\text{ for all }s\in S \\
&\Leftrightarrow &rs\in N(S)\text{  for all }s\in S \\
&\Leftrightarrow &r\in \psi _{(S,R)}\left( N(S)\right) \\
&\Leftrightarrow &r+\psi _{(S,R)}\left( N(S)\right) =0
\end{eqnarray*}
\end{proof}

\begin{corollary}
\label{Surjec hom then NC homeomo}If $h:R\rightarrow T$ is a surjective
morphism of $\mathcal{E}(S)$ then $Q_{S}(h):Q(S,R)\rightarrow Q(S,T)$ is an
isomorphism and therefore $\mathrm{NC}_{S}(h):\mathrm{NC}_{S}\left( T\right)
\rightarrow \mathrm{NC}_{S}\left( R\right) $ is a homeomorphism.
\end{corollary}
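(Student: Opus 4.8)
The plan is to verify that $Q_{S}(h)$ is a bijection and then obtain the topological statement by pure functoriality. Injectivity is exactly the content of the immediately preceding proposition, so the only thing to check is surjectivity. Recalling from Proposition \ref{Q well defined} that in the category $\mathcal{E}(S)$ the map acts by $r+\psi_{(S,R)}(N(S))\mapsto h(r)+\psi_{(S,T)}(N(S))$, I would take an arbitrary class $t+\psi_{(S,T)}(N(S))$ in $Q(S,T)$; since $h$ is surjective there is some $r\in R$ with $h(r)=t$, and then $Q_{S}(h)\bigl(r+\psi_{(S,R)}(N(S))\bigr)=t+\psi_{(S,T)}(N(S))$. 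Hence $Q_{S}(h)$ is onto.

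Combining surjectivity with the injectivity already established, $Q_{S}(h)$ is a bijective homomorphism of unitary rings. Since the set-theoretic inverse of a bijective ring homomorphism is again a ring homomorphism, $Q_{S}(h)$ is an isomorphism in $\mathcal{CR}_{1}$.

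For the topological conclusion I would simply use that $\mathrm{NC}_{S}=\mathrm{Spec}\circ Q_{S}$ and that any functor carries isomorphisms to isomorphisms. Thus $\mathrm{NC}_{S}(h)=\mathrm{Spec}(Q_{S}(h))$ is an isomorphism in $\mathbb{S}$; as the isomorphisms of spectral spaces (bijective strongly continuous maps whose inverse is also strongly continuous) are precisely the homeomorphisms, it follows that $\mathrm{NC}_{S}(h):\mathrm{NC}_{S}(T)\rightarrow \mathrm{NC}_{S}(R)$ is a homeomorphism.

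There is no substantial obstacle in this corollary: the whole argument reduces to the one-line surjectivity step, with injectivity inherited from the previous proposition and the passage from ring isomorphism to homeomorphism being a formal consequence of functoriality. The only two points deserving a word of care are that a bijective morphism of $\mathcal{CR}_{1}$ really is an isomorphism there (its inverse is a homomorphism), and that $\mathrm{Spec}$, although contravariant, still sends isomorphisms to isomorphisms so that the direction of $\mathrm{NC}_{S}(h)$ is from $\mathrm{NC}_{S}(T)$ to $\mathrm{NC}_{S}(R)$.
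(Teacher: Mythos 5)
Your proof is correct and follows exactly the route the paper intends: the corollary is stated there without proof, as an immediate consequence of the preceding injectivity proposition together with the evident surjectivity of $Q_{S}(h)$ inherited from that of $h$, followed by applying the contravariant functor $\mathrm{Spec}$. Your write-up simply makes explicit the steps (bijective homomorphism is an isomorphism in $\mathcal{CR}_{1}$; $\mathrm{Spec}$ of a ring isomorphism is a homeomorphism, with the direction reversed) that the paper leaves to the reader.
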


\begin{corollary}
If $h:R\rightarrow T$ is a morphism of $\mathcal{E}(S)$ then $\mathrm{NC}%
_{S}(h)\left( \mathrm{NC}_{S}\left( T\right) \right) $ is dense in $\mathrm{%
NC}_{S}\left( R\right) .$
\end{corollary}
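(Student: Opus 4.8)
The plan is to read off $\mathrm{NC}_{S}(h)$ as the map on spectra induced by the ring homomorphism $Q_{S}(h)$, and then to compute the closure of its image using the hull-kernel formula recalled in the Introduction. Concretely, since $\mathrm{NC}=\mathrm{Spec}\circ Q$, the function $\mathrm{NC}_{S}(h):\mathrm{NC}_{S}(T)\rightarrow \mathrm{NC}_{S}(R)$ sends a prime ideal $\mathfrak{q}$ of $Q(S,T)$ to $Q_{S}(h)^{-1}(\mathfrak{q})$, so its image is the set $\mathcal{B}=\{Q_{S}(h)^{-1}(\mathfrak{q}):\mathfrak{q}\in \mathrm{Spec}\,Q(S,T)\}$. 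By the hull-kernel description of the closure, $\overline{\mathcal{B}}=V_{Q(S,R)}(K)$ where $K=\bigcap_{J\in \mathcal{B}}J$ is the kernel of $\mathcal{B}$, and density of $\mathcal{B}$ in $\mathrm{NC}_{S}(R)=\mathrm{Spec}\,Q(S,R)$ amounts to showing $V_{Q(S,R)}(K)=\mathrm{Spec}\,Q(S,R)$, i.e. $K\subseteq N(Q(S,R))$.

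The first computation is that $K=\bigcap_{\mathfrak{q}}Q_{S}(h)^{-1}(\mathfrak{q})=Q_{S}(h)^{-1}\!\left(\bigcap_{\mathfrak{q}}\mathfrak{q}\right)=Q_{S}(h)^{-1}(N(Q(S,T)))$, since preimages commute with intersections and the nilradical is the intersection of all primes. The key step is then to identify this preimage: because $Q_{S}(h)$ is injective by the preceding proposition, I claim $Q_{S}(h)^{-1}(N(Q(S,T)))=N(Q(S,R))$. Indeed, if $Q_{S}(h)(a)$ is nilpotent then $Q_{S}(h)(a^{n})=Q_{S}(h)(a)^{n}=0$ for some $n$, whence $a^{n}=0$ by injectivity and $a\in N(Q(S,R))$; the reverse inclusion is immediate since a homomorphism carries nilpotents to nilpotents.

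Having shown $K=N(Q(S,R))$, the conclusion follows at once: every prime ideal contains the nilradical, so $V_{Q(S,R)}(N(Q(S,R)))=\mathrm{Spec}\,Q(S,R)=\mathrm{NC}_{S}(R)$, and therefore $\overline{\mathrm{NC}_{S}(h)(\mathrm{NC}_{S}(T))}=\mathrm{NC}_{S}(R)$, as required. I expect no real obstacle here; the whole argument rests on the general principle that an injective homomorphism of unitary rings induces a map with dense image on spectra, and the only point deserving care is the identity $Q_{S}(h)^{-1}(N(Q(S,T)))=N(Q(S,R))$, which is exactly where the injectivity of $Q_{S}(h)$ from the previous proposition is used. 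Without injectivity one would only obtain density onto the closed set $V_{Q(S,R)}(\ker Q_{S}(h))$.
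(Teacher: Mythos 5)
Your proof is correct and takes essentially the same route as the paper: the paper's proof consists of invoking the injectivity of $Q_{S}(h)$ from the preceding proposition together with the standard fact (cited to Atiyah--MacDonald) that an injective homomorphism of unitary rings induces a spectral map with dense image. Your hull-kernel computation of the kernel $K=Q_{S}(h)^{-1}\left( N\left( Q(S,T)\right) \right) =N\left( Q(S,R)\right)$ is exactly a proof of that cited fact, so you have merely inlined what the paper outsources to the reference.
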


\begin{proof}
It is consequence of the injectivity of $Q_{S}(h):Q_{S}\left( R\right)
\rightarrow Q_{S}(T)$ (see \cite{Atiyah}).
\end{proof}

\begin{proposition}
If $h:R\rightarrow T$ is a morphism of $\mathcal{E}(S)$, then $\mathrm{NC}%
_{S}(h)\circ \lambda _{(S,T)}=\lambda _{(S,R)}.$%
\begin{figure}[tbh]
\centering
\begin{tikzpicture}[scale=0.8]
	\draw[<-] (6.5,-0.3) --node[right]{\footnotesize $\mathrm{NC}_{S}(h)$} (6.5,-1.7);
    \draw[->] (3.7,-1) --node[above]{\scriptsize $\lambda _{(S,R)}$} (6,-0.1);
    \draw[->] (3.7,-1.2) --node[below]{\scriptsize $\lambda _{(S,T)}$} (6,-2.2);
	\draw (2.8,-1.1) node{\small $\mathrm{Spec}(S)$};
	\draw (7.1,0) node{\small $\mathrm{NC}_{S}(R)$} (7.1,-2.2) node{\small $\mathrm{NC}_{S}(T)$};
\end{tikzpicture}
\end{figure}
\end{proposition}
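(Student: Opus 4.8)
The plan is to recognize this statement as a degenerate instance of the naturality of $\lambda$ already established in Proposition~\ref{lambda inclusion in NC}. First I would note that a morphism $h:R\to T$ of $\mathcal{E}(S)$ is, by definition, a unitary homomorphism restricting to the identity on $S$; viewed inside the larger category $\mathcal{E}$ it is precisely a morphism $h:(S,R)\to (S,T)$, since $h(S)=S$. Consequently $\mathrm{NC}_S(h)$ is the restriction to $\mathcal{E}(S)$ of the functor $\mathrm{NC}$ evaluated at $h$, and likewise the components $\lambda_{(S,R)}$ and $\lambda_{(S,T)}$ are the components of the natural transformation $\lambda$ at the objects $(S,R)$ and $(S,T)$.

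Next I would apply Proposition~\ref{lambda inclusion in NC} directly to this morphism. The commuting square there reads
\[
\mathrm{NC}(h)\circ \lambda_{(S,T)}=\lambda_{(S,R)}\circ (\mathrm{Spec}\circ W)(h).
\]
The decisive observation is that $W(h)=h\mid_S=\mathrm{id}_S$, so that $(\mathrm{Spec}\circ W)(h)=\mathrm{Spec}(\mathrm{id}_S)=\mathrm{id}_{\mathrm{Spec}(S)}$. Substituting this into the right-hand side collapses the square to the triangle claimed here, namely $\mathrm{NC}_S(h)\circ \lambda_{(S,T)}=\lambda_{(S,R)}$. If a self-contained verification is preferred instead, I would mimic the chain of biconditionals in the proof of Proposition~\ref{lambda inclusion in NC}: starting from $P\in\mathrm{Spec}(S)$ one has $\mathrm{NC}_S(h)(\lambda_{(S,T)}(P))=Q_S(h)^{-1}\bigl(\psi_{(S,T)}(P)/\psi_{(S,T)}(N(S))\bigr)$, and running those equivalences with the contraction $h^{-1}(P)$ replaced by $P$ (legitimate exactly because $h$ fixes $S$ pointwise) yields $\psi_{(S,R)}(P)/\psi_{(S,R)}(N(S))=\lambda_{(S,R)}(P)$.

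I do not anticipate a genuine obstacle: the substance of the statement was already carried out in Proposition~\ref{lambda inclusion in NC}, and the only point demanding attention is the bookkeeping that embeds $\mathcal{E}(S)$ into $\mathcal{E}$ and records that $W$ sends every morphism of $\mathcal{E}(S)$ to an identity. The one place to be careful is the notational clash between the fixed ideal $S$ and the generic target ring when transcribing the earlier computation.
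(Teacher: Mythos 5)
Your proposal is correct and takes essentially the same route as the paper, whose entire proof is the single line that the statement ``is a direct consequence of Proposition~\ref{lambda inclusion in NC}.'' Your write-up merely makes the implicit bookkeeping explicit: a morphism of $\mathcal{E}(S)$ is a morphism $(S,R)\to(S,T)$ of $\mathcal{E}$ with $W(h)=\mathrm{id}_{S}$, so the naturality square of $\lambda$ collapses to the claimed triangle.
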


\begin{proof}
It is a direct consequence of the Proposition \ref{lambda inclusion in NC}.
\end{proof}

\begin{corollary}
If $h:R\rightarrow T$ is a morphism of $\mathcal{E}(S)$ then $\mathrm{Spec}S$
is a subspace of $\mathrm{NC}_{S}(h)\left( \mathrm{NC}_{S}\left( T\right)
\right) .$
\end{corollary}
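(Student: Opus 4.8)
The plan is to read off the result directly from the previous proposition, which gives the factorization $\lambda_{(S,R)} = \mathrm{NC}_S(h)\circ\lambda_{(S,T)}$. Set $Y := \mathrm{NC}_S(h)\!\left(\mathrm{NC}_S(T)\right)$, regarded as a subspace of $\mathrm{NC}_S(R)$. Since $\lambda_{(S,T)}$ takes values in $\mathrm{NC}_S(T)$, the composite $\mathrm{NC}_S(h)\circ\lambda_{(S,T)}$ takes values in $Y$; hence $\lambda_{(S,R)}(\mathrm{Spec}\,S)\subseteq Y$. So the image of $\mathrm{Spec}\,S$ under $\lambda_{(S,R)}$ already sits inside $Y$, and it only remains to check that the map realizing this is a topological embedding.

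Next I would recall that $\lambda_{(S,R)}$ is not merely a continuous injection but a homeomorphism onto its image. This is exactly the content of the Corollary of Section 2 together with the Remark identifying the inclusion of $\mathrm{Spec}\,S$ into $\mathrm{Spec}\left(R/\psi_{(S,R)}(N(S))\right)=\mathrm{NC}_S(R)$ as $\lambda_{(S,R)}$; equivalently it follows from the Proposition stating that $\psi$ is injective, continuous and open onto its image, transported through the homeomorphism $V_R\!\left(\psi_{(S,R)}(N(S))\right)\approx\mathrm{Spec}\left(R/\psi_{(S,R)}(N(S))\right)$. Thus $\lambda_{(S,R)}$ is an embedding of $\mathrm{Spec}\,S$ into $\mathrm{NC}_S(R)$.

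Finally I would conclude by the transitivity of the subspace topology. Since $\lambda_{(S,R)}(\mathrm{Spec}\,S)\subseteq Y\subseteq\mathrm{NC}_S(R)$, the subspace topology that $\lambda_{(S,R)}(\mathrm{Spec}\,S)$ inherits from $Y$ coincides with the one it inherits from $\mathrm{NC}_S(R)$. Because $\lambda_{(S,R)}$ is an embedding into $\mathrm{NC}_S(R)$, the corestricted map $\mathrm{Spec}\,S\to Y$ is therefore also an embedding, and $\mathrm{Spec}\,S$ is a subspace of $Y=\mathrm{NC}_S(h)\!\left(\mathrm{NC}_S(T)\right)$, as claimed.

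The argument carries no serious obstacle: the only point demanding care is distinguishing "continuous injection" from "embedding", which is why the compactification structure of Section 2 is invoked rather than just continuity of $\lambda_{(S,R)}$. Everything else is the formal composition of the earlier naturality identity with the standard fact that subspaces of subspaces are subspaces.
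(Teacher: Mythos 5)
Your proof is correct and takes essentially the same route as the paper: the corollary is read off from the preceding proposition's identity $\mathrm{NC}_{S}(h)\circ \lambda _{(S,T)}=\lambda _{(S,R)}$, together with the fact from Section 2 that $\lambda _{(S,R)}$ embeds $\mathrm{Spec}\,S$ into $\mathrm{NC}_{S}(R)$. Your explicit treatment of the corestriction and the transitivity of the subspace topology simply spells out what the paper leaves implicit.
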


\section{Second variation}

Given a ring $S$, we denote $U_{0}\left( S\right) $ the set $S\times \mathbb{%
Z}$ endowed with the operations:%
\begin{eqnarray*}
(s,\alpha )+(t,\beta ) &=&(s+t,\alpha +\beta ) \\
(s,\alpha )(t,\beta ) &=&(st+\beta s+\alpha t,\alpha \beta ).
\end{eqnarray*}%
It is well known that $U_{0}\left( S\right) $ is a unitary ring and that, if
we identify $S$ with $S_{0}=S\times \{0\},$ $S$ is an ideal of $U_{0}(S).$
Besides we have the following universal property:

\begin{theorem}
Let $S$ be a ring and consider $\iota _{S}:S\rightarrow U_{0}(S):s\mapsto
(s,0).$ If $R$ is a unitary ring and $g:S\rightarrow R$ is a homomorphism,
then there exists a unique homomorphism of unitary rings $\widetilde{g}%
:U_{0}(S)\rightarrow R$ such that $\widetilde{g}\circ \iota _{S}=g.$%
\begin{figure}[tbh]
\centering
\begin{tikzpicture}[scale=0.8]
	\draw[->] (2.6,0) --node[above]{\footnotesize $\iota_{S}$} (4.4,0);
    \draw[->] (5,-0.3) --node[right]{\footnotesize $\exists !$ $\widetilde{g}$} (5,-1.8);
    \draw[->] (2.55,-0.3) --node[below]{\footnotesize $g$} (4.7,-1.9);
	\draw (2.3,0) node{\small $S$} (5.2,0) node{\small $U_{0}(S)$} (5,-2.1) node{\small $R$};
\end{tikzpicture}
\end{figure}
\end{theorem}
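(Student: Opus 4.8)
The plan is to verify the universal property of $U_0(S)$ directly by exhibiting the map $\widetilde{g}$ explicitly and then checking the three things required: that it is a homomorphism of unitary rings, that it satisfies $\widetilde{g}\circ \iota_S = g$, and that it is the unique such map. Since every element of $U_0(S)$ has the canonical form $(s,\alpha) = (s,0) + \alpha(0,1)$ and $(0,1)$ is the multiplicative identity of $U_0(S)$, any unitary homomorphism must send $(0,1)$ to $1_R$ and $(s,0)$ to $g(s)$; this forces the definition
\begin{equation*}
\widetilde{g}(s,\alpha) = g(s) + \alpha\cdot 1_R.
\end{equation*}
First I would take this as the definition and confirm $\widetilde{g}\circ \iota_S = g$, which is immediate since $\widetilde{g}(s,0) = g(s) + 0 = g(s)$, and also $\widetilde{g}(0,1) = 1_R$, so $\widetilde{g}$ respects the identity.

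Next I would check that $\widetilde{g}$ is additive and multiplicative. Additivity follows at once from the componentwise addition in $U_0(S)$ and the fact that $g$ is additive. For multiplicativity I would compute both sides of $\widetilde{g}((s,\alpha)(t,\beta)) = \widetilde{g}(s,\alpha)\,\widetilde{g}(t,\beta)$. Using the given product $(s,\alpha)(t,\beta) = (st + \beta s + \alpha t, \alpha\beta)$, the left side is $g(st + \beta s + \alpha t) + \alpha\beta\cdot 1_R = g(s)g(t) + \beta g(s) + \alpha g(t) + \alpha\beta\cdot 1_R$, where I use that $g$ is a ring homomorphism and that the integer multiples $\beta g(s)$ and $\alpha g(t)$ come out of $g$ by additivity. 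The right side expands as $(g(s) + \alpha\cdot 1_R)(g(t) + \beta\cdot 1_R) = g(s)g(t) + \beta g(s) + \alpha g(t) + \alpha\beta\cdot 1_R$, using that $1_R$ is the identity and that integer scalars commute with ring elements. The two expressions agree, so $\widetilde{g}$ is multiplicative.

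Finally I would establish uniqueness: if $\widetilde{g}'$ is any unitary homomorphism with $\widetilde{g}'\circ \iota_S = g$, then $\widetilde{g}'(s,0) = g(s)$ and $\widetilde{g}'(0,1) = 1_R$ because $\widetilde{g}'$ preserves the identity. Writing $(s,\alpha) = (s,0) + \alpha(0,1)$ and using additivity forces $\widetilde{g}'(s,\alpha) = g(s) + \alpha\cdot 1_R = \widetilde{g}(s,\alpha)$, so the two maps coincide. I do not expect any serious obstacle here: the content is entirely the verification of the product formula. The one point requiring a little care is the careful handling of the integer coefficients $\alpha, \beta$, namely that $\alpha\cdot 1_R$ behaves correctly under multiplication in $R$ and that $g(\alpha t) = \alpha g(t)$ for integer $\alpha$; both follow from additivity of $g$ and from the standard interpretation of $\mathbb{Z}$-scalar multiples in a unitary ring, but they are the place where the somewhat unusual multiplication on $U_0(S)$ gets reconciled with the ring structure on $R$.
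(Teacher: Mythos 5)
Your proof is correct and complete: the explicit formula $\widetilde{g}(s,\alpha)=g(s)+\alpha\cdot 1_{R}$, the verification of additivity, multiplicativity and preservation of the identity $(0,1)$, and the uniqueness argument via $(s,\alpha)=(s,0)+\alpha(0,1)$ constitute the standard proof of this universal property. The paper itself offers no proof (it states the result as well known, in the spirit of the unitization construction cited from Jacobson), so your argument supplies exactly the routine verification the authors omit, with the only delicate point --- the handling of the integer coefficients via $g(\alpha t)=\alpha g(t)$ --- treated correctly.
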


This property allows us to extend $U_{0}$ to a functor from the category $%
\mathcal{CR}$ of commutative rings to the category $\mathcal{CR}_{1}$ of
unitary rings, defining $U_{0}(h)=\widetilde{\iota _{T}\circ h}$, for a
homomorphism $h:S\rightarrow T.$ We have also that $\iota =(\iota
_{S})_{S\in Ob\mathcal{CR}}$ is a natural transformation from the identity
functor of the category $\mathcal{CR}$ to the functor $U_{0}$ considered as
endo-functor of $\mathcal{CR}.$

Notice that if $h:S\rightarrow T$ is a surjective homomorphism then $%
U_{0}(h) $ is a morphism in the category $\mathcal{E}$ from the object $%
(S,U_{0}(S))$ to the object $(T,U_{0}(T)).$ Thus, $U_{0}$ can be seen as a
functor from the category $\mathcal{CR}^{s}$ of commutative rings and
surjective homomorphisms to the category $\mathcal{E}.$

\begin{figure}[!htb]
\centering
\begin{tikzpicture}[scale=0.8]
	\draw[->] (3,-0.3) --node[left]{\footnotesize $h$} (3,-1.7);
	\draw[->] (6.8,-0.3) --node[right]{\footnotesize $U_{0}(h)$} (6.8,-1.7);
    \draw[->] (4,1) --node[above]{\scriptsize $U_{0}$} (5.8,1);
    \draw[|->] (4.3,-1.1) -- (5.5,-1.1);
	\draw (2.8,1) node{\small $\mathcal{CR}^{s}$} (7,1) node{\small $\mathcal{E}$};
	\draw (3,0) node{\small $S$} (3,-2.2) node{\small $T$};
	\draw (7,0) node{\small $(S,U_{0}(S))$} (7,-2.2) node{\small $(T,U_{0}(T))$};
\end{tikzpicture}
\end{figure}

The universal property of $U_{0}$ allows us to conclude immediately the
following theorem:

\begin{theorem}
For each ring $S,$ $U_{0}(S)$ is an initial object of the category $\mathcal{%
E}(S).$
\end{theorem}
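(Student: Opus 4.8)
The plan is to deduce this directly from the universal property of $U_{0}$ established in the preceding theorem, using the inclusion of $S$ into a given i-extension as the test homomorphism $g$.

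First I would fix an arbitrary object $R$ of $\mathcal{E}(S)$. Since $R$ is a unitary i-extension of $S$, the inclusion $k:S\rightarrow R$ of $S$ as an ideal of $R$ is a homomorphism of (not necessarily unitary) rings. Applying the universal property of $U_{0}$ to $g=k$ produces a unique homomorphism of unitary rings $\widetilde{k}:U_{0}(S)\rightarrow R$ satisfying $\widetilde{k}\circ\iota_S=k$. This $\widetilde{k}$ will be the unique morphism witnessing initiality.

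Next I would check that $\widetilde{k}$ is actually a morphism of $\mathcal{E}(S)$, i.e.\ that it restricts to the identity of $S$. Under the identification of $S$ with $S_{0}=S\times\{0\}$ inside $U_{0}(S)$, each $s\in S$ is the element $\iota_S(s)=(s,0)$, and the relation $\widetilde{k}\circ\iota_S=k$ gives $\widetilde{k}(s,0)=k(s)=s$. Hence $\widetilde{k}\mid_S=\mathrm{id}_S$, so $\widetilde{k}$ is indeed a morphism of $\mathcal{E}(S)$.

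Finally, for uniqueness I would observe that a morphism of $\mathcal{E}(S)$ from $U_{0}(S)$ to $R$ is by definition a homomorphism of unitary rings $h$ with $h\mid_S=\mathrm{id}_S$, which is exactly the condition $h\circ\iota_S=k$. The uniqueness clause of the universal property then forces $h=\widetilde{k}$, so there is precisely one morphism $U_{0}(S)\rightarrow R$ in $\mathcal{E}(S)$ and $U_{0}(S)$ is initial. There is no genuine obstacle here: the proof is essentially a transcription of the universal property, and the only point requiring care is the translation between ``restricting to the identity on $S$'' and ``factoring the inclusion $k$ through $\iota_S$'', which rests on the identification $S\cong S_{0}$; once this is made explicit the statement follows at once.
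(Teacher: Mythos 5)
Your proof is correct and follows exactly the paper's argument: apply the universal property of $U_{0}(S)$ to the inclusion $S\rightarrow R$, and note that the factorization condition $\widetilde{k}\circ\iota_{S}=k$ is precisely the statement that the resulting map restricts to the identity on $S$. The paper's proof is just a terser version of the same reasoning; your explicit treatment of the uniqueness clause and the identification $S\cong S_{0}$ fills in details the paper leaves implicit.
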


\begin{proof}
Let $R$ be an object of $\mathcal{E}(S)$ and let $u:S\rightarrow R$ be the
inclusion homomorphism. The universal property guarantees that there is a 
unique homomorphism of unitary rings $u_{R}:U_{0}(S)\rightarrow R$ such
that $u_{R}\circ \iota _{S}=u,$ that is, $u_{R}$ restricted to $S$ is the
identity.
\end{proof}

As the functor $\mathrm{NC}_{S}$ is injective in objects, it is clear that
the image of $\mathrm{NC}_{S}$ is a subcategory of $\mathbb{S}$, that we
denote $\mathcal{NC}(S)$ (category of nilcompactifications of $S$). We
denote $\mathrm{NC}_{0}$ the functor $\mathrm{NC}\circ U_{0}:\mathcal{CR}%
^{s}\rightarrow \mathbb{S}.$ Thus, we have the following result:

\begin{corollary}
For each ring $S,$ $\mathrm{NC}_{0}\left( S\right) $ is a final object of
the category $\mathcal{NC}(S).$
\end{corollary}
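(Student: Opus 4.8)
The plan is to deduce this corollary purely formally from the immediately preceding theorem, namely that $U_{0}(S)$ is an initial object of $\mathcal{E}(S)$, combined with two structural facts about the functor $\mathrm{NC}_{S}$: it is contravariant (being $\mathrm{Spec}\circ Q_{S}$) and, as already noted before the definition of $\mathcal{NC}(S)$, it is injective on objects. The first thing to record is that $\mathrm{NC}_{0}(S)=\mathrm{NC}(U_{0}(S))=\mathrm{NC}_{S}(U_{0}(S))$, so that $\mathrm{NC}_{0}(S)$ is genuinely an object of the image category $\mathcal{NC}(S)$. The whole statement then reduces to a single general principle: a contravariant functor that is injective on objects sends an initial object of its source category to a final object of its image. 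I would either state this as a short categorical remark or simply carry it out in the case at hand.

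For the existence of morphisms into $\mathrm{NC}_{0}(S)$, I would take an arbitrary object $X$ of $\mathcal{NC}(S)$, write it as $X=\mathrm{NC}_{S}(R)$ for some $R\in \mathcal{E}(S)$, and invoke the theorem to obtain the unique morphism $u_{R}:U_{0}(S)\rightarrow R$ of $\mathcal{E}(S)$. Applying the contravariant functor $\mathrm{NC}_{S}$ yields $\mathrm{NC}_{S}(u_{R}):\mathrm{NC}_{S}(R)\rightarrow \mathrm{NC}_{S}(U_{0}(S))$, i.e. a morphism $X\rightarrow \mathrm{NC}_{0}(S)$ lying in $\mathcal{NC}(S)$. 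This settles existence with essentially no computation.

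The delicate step, and the one I expect to be the main obstacle, is uniqueness, because it requires being precise about which arrows actually are morphisms of the image category $\mathcal{NC}(S)$. By construction every morphism of $\mathcal{NC}(S)$ is of the form $\mathrm{NC}_{S}(g)$ for some morphism $g$ of $\mathcal{E}(S)$, and this collection is closed under composition precisely because $\mathrm{NC}_{S}$ is a functor. Hence any morphism $X\rightarrow \mathrm{NC}_{0}(S)$ is some $\mathrm{NC}_{S}(g)$ whose domain equals $\mathrm{NC}_{S}(R)$ and whose codomain equals $\mathrm{NC}_{S}(U_{0}(S))$. This is exactly where injectivity on objects is indispensable: contravariance forces $g$ to run from the ring whose $\mathrm{NC}_{S}$-value is $\mathrm{NC}_{S}(U_{0}(S))$ to the ring whose value is $\mathrm{NC}_{S}(R)$, and injectivity on objects pins these rings down to be exactly $U_{0}(S)$ and $R$. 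Thus $g:U_{0}(S)\rightarrow R$ is a morphism of $\mathcal{E}(S)$, and initiality gives $g=u_{R}$, so $\mathrm{NC}_{S}(g)=\mathrm{NC}_{S}(u_{R})$ coincides with the morphism built above.

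Putting the two halves together shows that $\mathrm{NC}_{0}(S)=\mathrm{NC}_{S}(U_{0}(S))$ admits, from every object of $\mathcal{NC}(S)$, exactly one morphism into it, i.e. it is a final object of $\mathcal{NC}(S)$. The only input specific to this situation is the preceding theorem identifying $U_{0}(S)$ as initial in $\mathcal{E}(S)$, which in turn rested on the universal property of $U_{0}$; everything else is the formal transport of (co)limits through a contravariant, object-injective functor.
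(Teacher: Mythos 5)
Your proposal is correct and is precisely the argument the paper intends: the paper states this corollary without proof, presenting it as an immediate consequence of the theorem that $U_{0}(S)$ is initial in $\mathcal{E}(S)$ together with the contravariance of $\mathrm{NC}_{S}$ and its injectivity on objects (which is what makes the image $\mathcal{NC}(S)$ a subcategory). Your write-up simply makes explicit the existence and uniqueness steps that the paper leaves implicit, including the key observation that injectivity on objects is what pins down the domain and codomain of an arbitrary morphism of $\mathcal{NC}(S)$ so that initiality can be applied.
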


\begin{remark}
As a consequence of Corollary \ref{Surjec hom then NC homeomo} we have that $%
\mathrm{NC}_{0}\left( S\right) $ is homeomorphic to $\mathrm{NC}_{0}\left(
S/N(S)\right) .$ Therefore we can reduce the study of nilcompactifications
to semi-prime or reduced rings.
\end{remark}

\section{Third variation}

In this section we are working again with a fixed ring $S$. Let $R$ be an
object of $\mathcal{E}(S)$ and consider the unique morphism $%
u_{R}:U_{0}(S)\rightarrow R$ of $\mathcal{E}(S).$ Then $\mathrm{NC}%
_{S}(u_{R}):\mathrm{NC}_{S}(R)\rightarrow \mathrm{NC}_{0}\left( S\right) $
is a (strongly) continuous function. We denote $\eta (R)$ the image of the
function $\mathrm{NC}_{S}(u_{R}).$

\begin{theorem}
If $R$ is an object of $\mathcal{E}(S)$ then $\eta (R)$ is an A class
compactification of $\mathrm{Spec}S$ and besides it is dense in $\mathrm{NC}%
_{0}\left( S\right) .$
\end{theorem}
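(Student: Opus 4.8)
The plan is to apply the results already established for an arbitrary morphism of $\mathcal{E}(S)$ to the single morphism $u_{R}:U_{0}(S)\rightarrow R$, whose existence and uniqueness come from the fact that $U_{0}(S)$ is the initial object of $\mathcal{E}(S)$. Writing $\eta (R)=\mathrm{NC}_{S}(u_{R})\left( \mathrm{NC}_{S}(R)\right) \subseteq \mathrm{NC}_{S}(U_{0}(S))=\mathrm{NC}_{0}(S)$, all the required assertions (compactness, the embedding of $\mathrm{Spec}S$, its density in $\eta (R)$, and the density of $\eta (R)$ in $\mathrm{NC}_{0}(S)$) will be read off from those results with $h=u_{R}$.

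The density of $\eta (R)$ in $\mathrm{NC}_{0}(S)$ is immediate: it is precisely the Corollary stating that $\mathrm{NC}_{S}(h)\left( \mathrm{NC}_{S}(T)\right)$ is dense in $\mathrm{NC}_{S}(R)$ for every morphism $h:R\rightarrow T$ of $\mathcal{E}(S)$, read for $h=u_{R}$. For the compactification claim, compactness of $\eta (R)$ is free, since $\mathrm{NC}_{S}(R)=\mathrm{Spec}\left( Q(S,R)\right)$ is the spectrum of a unitary ring, hence compact, and $\eta (R)$ is its continuous image under $\mathrm{NC}_{S}(u_{R})$. That $\mathrm{Spec}S$ sits inside $\eta (R)$ as a subspace is exactly the Corollary asserting $\mathrm{Spec}S$ is a subspace of $\mathrm{NC}_{S}(h)\left( \mathrm{NC}_{S}(T)\right)$, again read for $h=u_{R}$; explicitly the embedding is $\lambda _{(S,U_{0}(S))}$, because Proposition \ref{lambda inclusion in NC} gives $\mathrm{NC}_{S}(u_{R})\circ \lambda _{(S,R)}=\lambda _{(S,U_{0}(S))}$, so that $\lambda _{(S,U_{0}(S))}\left( \mathrm{Spec}S\right) =\mathrm{NC}_{S}(u_{R})\left( \lambda _{(S,R)}\left( \mathrm{Spec}S\right) \right) \subseteq \eta (R)$.

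For the density of $\mathrm{Spec}S$ in $\eta (R)$, I would use that $\lambda _{(S,R)}\left( \mathrm{Spec}S\right)$ is dense in $\mathrm{NC}_{S}(R)$, which is the content of the Proposition identifying the closure of $\mathrm{Spec}_{S}R$ with $\mathrm{Spec}\left( R/\psi (N(S))\right)$. Applying the continuous map $\mathrm{NC}_{S}(u_{R})$ together with the elementary inclusion $f(\overline{A})\subseteq \overline{f(A)}$ yields $\eta (R)=\mathrm{NC}_{S}(u_{R})\left( \overline{\lambda _{(S,R)}\left( \mathrm{Spec}S\right)}\right) \subseteq \overline{\lambda _{(S,U_{0}(S))}\left( \mathrm{Spec}S\right)}$, whence $\lambda _{(S,U_{0}(S))}\left( \mathrm{Spec}S\right)$ is dense in $\eta (R)$.

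It remains to recognize $\eta (R)$ as a member of the A class. Here I would argue that $\eta (R)$ inherits the required structure from $\mathrm{NC}_{0}(S)$, which is itself an A class compactification of $\mathrm{Spec}S$ by the Corollary of Section 2 (since $U_{0}(S)$ is unitary, hence spectrally compact). The copy $\lambda _{(S,U_{0}(S))}\left( \mathrm{Spec}S\right)$ is open in $\mathrm{NC}_{0}(S)$ — this is the openness of $\mathrm{Spec}_{S}U_{0}(S)$ inside its closure recorded in the same Proposition — hence open in the subspace $\eta (R)$; combined with its density in $\eta (R)$ and the compactness of $\eta (R)$, this exhibits $\eta (R)$ as an A class compactification of $\mathrm{Spec}S$. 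I expect this last step to be the only genuine obstacle: the topological facts follow mechanically from the cited results, so the real work is to check that they coincide verbatim with the defining conditions of the A class of \cite{Acosta-Rubio0}, and in particular that the corestriction of $\lambda _{(S,U_{0}(S))}$ to $\eta (R)$ is the canonical embedding demanded there.
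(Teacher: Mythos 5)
Your proof is correct and follows the same basic architecture as the paper's: specialize the first-variation results to the canonical morphism $u_{R}:U_{0}(S)\rightarrow R$ and read everything off from the position of $\eta (R)$ in the sandwich $\mathrm{Spec}S\subseteq \eta (R)\subseteq \mathrm{NC}_{0}(S)$, together with compactness of $\eta (R)$ as a continuous image of the spectrum of the unitary ring $Q(S,R)$. The only real difference is in the bookkeeping of the two density claims: the paper simply observes that $\mathrm{Spec}S$ (embedded via $\lambda _{(S,U_{0}(S))}$) is an open \emph{dense} subspace of $\mathrm{NC}_{0}(S)$, after which both densities are purely formal --- a set squeezed between a dense subset and the whole space is itself dense, and a dense subset of the whole space is dense in every intermediate subspace. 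You instead get the density of $\eta (R)$ in $\mathrm{NC}_{0}(S)$ from the corollary that injectivity of $Q_{S}(u_{R})$ forces the induced spectral map to have dense image, and the density of $\mathrm{Spec}S$ in $\eta (R)$ from the inclusion $f\left( \overline{A}\right) \subseteq \overline{f(A)}$; both arguments are valid but invoke heavier tools than the paper needs. Your closing worry about the definition of the A class is not a genuine obstacle: the properties you establish (a compact space containing a homeomorphic copy of $\mathrm{Spec}S$ as an open dense subspace, embedded by the corestriction of $\lambda _{(S,U_{0}(S))}$) are exactly the ones the paper itself uses to certify A-class membership in the Corollary of Section 2.
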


\begin{proof}
It is enough to observe that $\mathrm{Spec}S\subseteq \eta (R)\subseteq \mathrm{NC}%
_{0}\left( S\right) $ and that $\mathrm{Spec}S$ is an open dense subspace of $\mathrm{%
NC}_{0}\left( S\right) .$
\end{proof}

We consider the pre-order between compactifications given in \cite{Margalef}%
: let $\left( X^{\prime },\tau ^{\prime }\right) $ and $\left( X^{\prime
\prime },\tau ^{\prime \prime }\right) $ be compactifications of $\left(
X,\tau \right) ,$ with immersions $f^{\prime }:\left( X,\tau \right)
\rightarrow \left( X^{\prime },\tau ^{\prime }\right) $ and $f^{\prime
\prime }:\left( X,\tau \right) \rightarrow \left( X^{\prime \prime },\tau
^{\prime \prime }\right) .$ It is said that $\left( X^{\prime },\tau
^{\prime }\right) $ $\preceq \left( X^{\prime \prime },\tau ^{\prime \prime
}\right) $ if there exists $h:\left( X^{\prime \prime },\tau ^{\prime \prime
}\right) \longrightarrow \left( X^{\prime },\tau ^{\prime }\right) $
continuous and surjective, such that $f^{\prime }=h\circ f^{\prime \prime }$%
. We obtain directly the following result:

\begin{proposition}
If $R$ is an object of $\mathcal{E}(S)$ then $\eta (R)$ is a
compactification of $\mathrm{Spec}S$ smaller than $\mathrm{NC}_{S}(R).$
\end{proposition}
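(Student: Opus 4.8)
The plan is to exhibit the comparison morphism required by the pre-order $\preceq$ directly from the data already assembled, and then read off the triangle identity from the naturality of $\lambda$. Recall that $\eta(R)$ was defined as the image of the strongly continuous function $\mathrm{NC}_{S}(u_{R}):\mathrm{NC}_{S}(R)\rightarrow \mathrm{NC}_{0}\left( S\right)$, where $u_{R}:U_{0}(S)\rightarrow R$ is the unique morphism of $\mathcal{E}(S)$ furnished by the initiality of $U_{0}(S)$. By the preceding theorem we have $\mathrm{Spec}S\subseteq \eta(R)\subseteq \mathrm{NC}_{0}\left( S\right)$, so the immersion of $\mathrm{Spec}S$ into the compactification $\eta(R)$ is simply $\lambda _{(S,U_{0}(S))}$ with its codomain restricted to $\eta(R)$, while the immersion of $\mathrm{Spec}S$ into $\mathrm{NC}_{S}(R)$ is $\lambda _{(S,R)}$.

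First I would corestrict $\mathrm{NC}_{S}(u_{R})$ to its image, obtaining a function $h:\mathrm{NC}_{S}(R)\rightarrow \eta(R)$. This $h$ is surjective by the very definition of $\eta(R)$, and it is continuous because the corestriction of a continuous function to its image (endowed with the subspace topology) remains continuous.

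Next I would verify the compatibility condition $f^{\prime }=h\circ f^{\prime \prime }$, that is, that $h$ carries the immersion $\lambda _{(S,R)}$ onto the immersion $\lambda _{(S,U_{0}(S))}$. This is exactly the content of the earlier proposition applied to the morphism $u_{R}:U_{0}(S)\rightarrow R$ of $\mathcal{E}(S)$, which yields $\mathrm{NC}_{S}(u_{R})\circ \lambda _{(S,R)}=\lambda _{(S,U_{0}(S))}$. Reading this equality with $\mathrm{NC}_{S}(u_{R})$ replaced by its corestriction $h$ gives precisely $h\circ \lambda _{(S,R)}=\lambda _{(S,U_{0}(S))}$, the required triangle.

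Together these observations show that $\eta(R)\preceq \mathrm{NC}_{S}(R)$, i.e.\ that $\eta(R)$ is a compactification of $\mathrm{Spec}S$ smaller than $\mathrm{NC}_{S}(R)$. I do not expect any genuine obstacle: both spaces are already known to be (A class) compactifications of $\mathrm{Spec}S$, so the only remaining task is to produce the comparison morphism, and that morphism is handed to us by the functor $\mathrm{NC}_{S}$ together with the naturality of $\lambda$. The single point deserving a moment's care is the bookkeeping of codomains, namely checking that restricting the target of $\mathrm{NC}_{S}(u_{R})$ from $\mathrm{NC}_{0}\left( S\right)$ down to $\eta(R)$ neither destroys continuity nor alters the triangle identity; but this is routine.
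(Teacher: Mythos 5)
Your proposal is correct and is precisely the argument the paper has in mind: the paper states the result as following ``directly'' from the pre-order definition, the comparison map being the corestriction of $\mathrm{NC}_{S}(u_{R})$ to its image $\eta(R)$, with the triangle identity $h\circ \lambda _{(S,R)}=\lambda _{(S,U_{0}(S))}$ supplied by the earlier proposition $\mathrm{NC}_{S}(h)\circ \lambda _{(S,T)}=\lambda _{(S,R)}$ applied to $u_{R}:U_{0}(S)\rightarrow R$. Your write-up simply makes explicit the details (continuity and surjectivity of the corestriction, and the codomain bookkeeping) that the paper leaves to the reader.
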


\begin{corollary}
If $\mathrm{NC}_{0}\left( S\right) $ is a Hausdorff space then, it is the
smallest nilcompactification of $\mathrm{Spec}S.$
\end{corollary}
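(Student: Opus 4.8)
The plan is to combine two facts that are already in our hands: the Proposition just stated, which compares each $\eta(R)$ with $\mathrm{NC}_0(S)$ under the pre-order $\preceq$, together with the classical characterization of the Hausdorff compactification as a smallest element. First I would recall that $\mathrm{NC}_0(S)=\mathrm{NC}_S\bigl(U_0(S)\bigr)$ arises from the initial object $U_0(S)$ of $\mathcal{E}(S)$, so that for every object $R$ of $\mathcal{E}(S)$ there is the canonical morphism $u_R:U_0(S)\to R$ inducing $\mathrm{NC}_S(u_R):\mathrm{NC}_S(R)\to\mathrm{NC}_0(S)$. The image of this map is $\eta(R)$, and by the previous Theorem $\eta(R)$ is a (dense, $A$-class) compactification of $\mathrm{Spec}S$.

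Next I would invoke the Corollary to the final Theorem on $\eta(R)$: for each $R$ in $\mathcal{E}(S)$ we have $\eta(R)\preceq\mathrm{NC}_S(R)$ in the pre-order of compactifications. The aim is to upgrade this to the statement that $\mathrm{NC}_0(S)$ itself is $\preceq$ every nilcompactification $\mathrm{NC}_S(R)$, i.e. is a smallest element of $\mathcal{NC}(S)$. The gap to close is that $\eta(R)$ need not equal $\mathrm{NC}_0(S)$ in general; this is exactly where the Hausdorff hypothesis enters. When $\mathrm{NC}_0(S)$ is Hausdorff, it is compact Hausdorff, and $\eta(R)$ is a dense subspace of it. A dense subspace of a compact Hausdorff space which is itself compact must be closed, and a dense closed subset is the whole space; hence $\eta(R)=\mathrm{NC}_0(S)$.

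Therefore I would argue as follows. Since $\mathrm{NC}_S(u_R):\mathrm{NC}_S(R)\to\mathrm{NC}_0(S)$ is a continuous map out of the compact space $\mathrm{NC}_S(R)$, its image $\eta(R)$ is compact; being compact inside the Hausdorff space $\mathrm{NC}_0(S)$ it is closed, and being dense (by the preceding Theorem) it coincides with $\mathrm{NC}_0(S)$. Consequently $\mathrm{NC}_S(u_R)$ is a continuous surjection $\mathrm{NC}_S(R)\to\mathrm{NC}_0(S)$ compatible with the inclusions of $\mathrm{Spec}S$ via $\lambda$, which is precisely the relation $\mathrm{NC}_0(S)\preceq\mathrm{NC}_S(R)$. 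As $R$ ranges over $\mathcal{E}(S)$ the spaces $\mathrm{NC}_S(R)$ range over all of $\mathcal{NC}(S)$, so $\mathrm{NC}_0(S)$ is $\preceq$-below every nilcompactification, i.e. it is the smallest one.

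The main obstacle is the verification that $\eta(R)=\mathrm{NC}_0(S)$ rather than a proper dense subspace, since without Hausdorffness a dense compact image can fail to be closed. The cleanest route is the compact-in-Hausdorff-is-closed argument sketched above, using that $\mathrm{NC}_S(R)$ is compact (it is a spectral space) and that $\mathrm{NC}_S(u_R)$ is continuous; I would make sure to cite the earlier Theorem for the density of $\eta(R)$ and to note that the compatibility $\mathrm{NC}_S(u_R)\circ\lambda_{(S,R)}=\lambda_{(S,U_0(S))}$ (established in the First variation) furnishes the map required by the definition of $\preceq$. Everything else is routine.
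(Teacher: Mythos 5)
Your proposal is correct and follows essentially the same argument as the paper: the image $\eta(R)$ of the compact space $\mathrm{NC}_S(R)$ under the continuous map $\mathrm{NC}_S(u_R)$ is compact, hence closed in the Hausdorff space $\mathrm{NC}_0(S)$, and being dense it must be all of $\mathrm{NC}_0(S)$, so $\mathrm{NC}_S(u_R)$ is a continuous surjection exhibiting $\mathrm{NC}_0(S)\preceq\mathrm{NC}_S(R)$ for every $R$. Your additional remark that the compatibility $\mathrm{NC}_S(u_R)\circ\lambda_{(S,R)}=\lambda_{(S,U_0(S))}$ is what makes the surjection qualify for the pre-order is a detail the paper leaves implicit, but it is the same proof.
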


\begin{proof}
If $R$ is an object of $\mathcal{E}(S)$ then $\eta (R)$ is a compact subset of $\mathrm{NC}_{0}\left( S\right) $, then it is closed. Besides, $\eta (R)$ is dense in $\mathrm{NC}_{0}\left( S\right) $, therefore $\mathrm{NC}_{S}(u_{R})$ is surjective.
\end{proof}

The proof of the following proposition is a simple routinary exercise:

\begin{proposition}
If $h:R\rightarrow T$ is a morphism of the category $\mathcal{E}(S)$ then $%
\eta (T)\subseteq \eta (R).$
\end{proposition}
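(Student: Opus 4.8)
The plan is to exploit the initiality of $U_{0}(S)$ in $\mathcal{E}(S)$ (established earlier) together with the contravariance of the functor $\mathrm{NC}_{S}=\mathrm{Spec}\circ Q_{S}$. The whole argument reduces to a single functorial manipulation plus the elementary fact that the image of a composite of functions factors through the image of the outer map.

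First I would record how $u_{R}$, $u_{T}$ and $h$ fit together. By the theorem stating that $U_{0}(S)$ is an initial object of $\mathcal{E}(S)$, there are unique morphisms $u_{R}:U_{0}(S)\rightarrow R$ and $u_{T}:U_{0}(S)\rightarrow T$ in this category. Since $h:R\rightarrow T$ is a morphism of $\mathcal{E}(S)$, the composite $h\circ u_{R}:U_{0}(S)\rightarrow T$ is again a morphism of $\mathcal{E}(S)$; by the uniqueness part of initiality it must coincide with $u_{T}$, so that $u_{T}=h\circ u_{R}$.

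Next I would apply the functor $\mathrm{NC}_{S}$ to this identity. Because $\mathrm{NC}_{S}$ is contravariant, it reverses the order of composition, giving
\begin{equation*}
\mathrm{NC}_{S}(u_{T})=\mathrm{NC}_{S}(u_{R})\circ \mathrm{NC}_{S}(h)
\end{equation*}
as (strongly) continuous functions $\mathrm{NC}_{S}(T)\rightarrow \mathrm{NC}_{0}(S)$, where $\mathrm{NC}_{S}(h):\mathrm{NC}_{S}(T)\rightarrow \mathrm{NC}_{S}(R)$ and $\mathrm{NC}_{S}(u_{R}):\mathrm{NC}_{S}(R)\rightarrow \mathrm{NC}_{0}(S)$. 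Taking images then finishes the argument: by definition $\eta(T)$ is the image of $\mathrm{NC}_{S}(u_{T})$, and since this map factors as $\mathrm{NC}_{S}(u_{R})\circ \mathrm{NC}_{S}(h)$, its image is contained in the image of $\mathrm{NC}_{S}(u_{R})$, which is precisely $\eta(R)$. Hence $\eta(T)\subseteq \eta(R)$.

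There is essentially no obstacle here — the statement is, as the paper notes, a routine exercise. The only point demanding care is getting the direction of composition right when passing through the contravariant functor $\mathrm{NC}_{S}$: one must start from $u_{T}=h\circ u_{R}$ (and not the reverse) so that, after contravariance, $\mathrm{NC}_{S}(u_{T})$ ends up factoring through $\mathrm{NC}_{S}(u_{R})$ rather than the other way around.
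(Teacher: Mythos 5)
Your proof is correct. The paper itself supplies no argument here—it dismisses the proposition as a ``simple routinary exercise''—and your derivation (uniqueness from the initiality of $U_{0}(S)$ giving $u_{T}=h\circ u_{R}$, then contravariance of $\mathrm{NC}_{S}$ giving $\mathrm{NC}_{S}(u_{T})=\mathrm{NC}_{S}(u_{R})\circ \mathrm{NC}_{S}(h)$, then taking images) is exactly the routine argument the setup intends, with the direction of composition handled correctly at each step.
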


If for each morphism $h:R\rightarrow T$ of the category $\mathcal{E}(S)$ we
define $\eta (h):\eta (T)\rightarrow \eta (R):x\mapsto x$ then, $\eta $ is a
functor from the category $\mathcal{E}(S)$ to the category $\mathrm{Top}$ of
topological spaces and continuous functions.

We have then the following natural question: if $R$ is an object of $%
\mathcal{E}(S)$, is $\eta (R)$ a spectral space?

\section{Two examples}

In this section we present two examples which illustrate some results of
this paper.

\begin{example}
Let $B$ be a Boolean ring without identity. As $B$ is semiprime, $N\left(
B\right) =0$. We are going to find $\psi _{\left( B,U_{0}\left( B\right)
\right) }\left( 0\right) :$

\noindent $\left( a,\alpha \right) \in \psi _{\left( B,U_{0}\left( B\right)
\right) }\left( 0\right) $ if and only if $ax+\alpha x=0$ for each $x\in B,$
that is, $ax=\alpha x$ for each $x\in B.$

\noindent If $\alpha $ is even then $ax=0$ for all $x\in B,$ from which it
follows that $a=0.$

\noindent If $\alpha $ is odd then $ax=x$ for all $x\in B$ and therefore, $a$
is the identity of $B,$ which is absurd.

\noindent Hence, $\psi _{\left( B,U_{0}\left( B\right) \right) }\left(
0\right) =\left\{ 0\right\} \times 2\mathbb{Z}$ and $Q\left( B,U_{0}\left(
B\right) \right) =U_{0}\left( B\right) /\left( \left\{ 0\right\} \times 2%
\mathbb{Z}\right) .$

\noindent In this case, $NC\left( B,U_{0}\left( B\right) \right) $ is
precisely the Alexandroff compactification of $\mathrm{Spec}B$ (see \cite%
{Acosta-Galeano}).
\end{example}

The following example shows that different i-extensions of a ring can
produce different nilcompactifications:

\begin{example}
Consider the ring without identity $S=x\mathbb{R}\left[ x\right] $. Two
different unitary i-ex\-ten\-sio\-ns of $S$ are $U_{0}\left( S\right) $ and $%
\mathbb{R}\left[ x\right] .$ Notice that the unique homomorphism from $%
U_{0}\left( S\right) $ to $\mathbb{R}\left[ x\right] $ is not surjective and
there not exists a homomorphism from $\mathbb{R}\left[ x\right] $ to $%
U_{0}\left( S\right) .$ It is easy to see that $\psi _{\left( S,U_{0}\left(
S\right) \right) }\left( 0\right) =0$ and $\psi _{\left( S,\mathbb{R}\left[ x%
\right] \right) }\left( 0\right) =0.$ Therefore, $Q\left( S,U_{0}\left(
S\right) \right) =U_{0}\left( S\right) $ and $Q\left( S,\mathbb{R}\left[ x%
\right] \right) =\mathbb{R}\left[ x\right] ,$ thus $NC\left( S,U_{0}\left(
S\right) \right) =\mathrm{Spec}\left( U_{0}\left( S\right) \right) $ and $%
NC\left( S,\mathbb{R}\left[ x\right] \right) =\mathrm{Spec}\mathbb{R}\left[ x%
\right] $. If we consider the surjective homomorphisms%
\begin{eqnarray*}
\pi &:&U_{0}\left( S\right) \rightarrow \mathbb{Z}:\left( s,z\right) \mapsto
z, \\
\beta &:&\mathbb{R}\left[ x\right] \rightarrow \mathbb{R}:p\left( x\right)
\mapsto p\left( 0\right) ,
\end{eqnarray*}%
we conclude, by the Correspondence Theorem, that $NC\left( S,U_{0}\left(
S\right) \right) $ is a compactification of $\mathrm{Spec}S$ by enumerable
points, while $NC\left( S,\mathbb{R}\left[ x\right] \right) $ is a
compactification of $\mathrm{Spec}S$ by one point.
\end{example}

The following diagram summarizes the ideas presented in this paper.

\begin{figure}[!htb]
\centering
\begin{tikzpicture}[scale=0.5]
    \draw (0,0) rectangle (2,2); \draw (1,1) node{$\mathcal{E}$};
    \draw (14,0) rectangle (16,2); \draw (15,1) node{$\mathbb{S}$};
    \draw (14,-6) rectangle (16,-4); \draw (15,-5) node{$\mathcal{CR}^{s}$};
    \draw (14,6) rectangle (16,8); \draw (15,7) node{$\mathcal{CR}_{1}$};
    \draw (5,6) rectangle (7,8); \draw (6,7) node{$\mathcal{CR}$};
    \draw (0,13) rectangle (2,15); \draw (1,14) node{$\mathcal{E}(S)$};
    \draw (19,13) rectangle (21,15); \draw (20,14) node{$\mathrm{Top}$};
	\draw[->] (3,0) -- (13,0); \draw (5,0.3) node{\footnotesize $\mathrm{Spec}\circ V$};
	\draw[->] (3,1) -- (13,1); \draw (5,1.3) node{\footnotesize $\mathrm{Spec}\circ W$};
	\draw[->] (3,2) -- (13,2); \draw (10.3,2.3) node{\footnotesize $NC=\mathrm{Spec}\circ Q$};
	\draw[double,->] (8.2,0.8) --node[right]{\footnotesize $\psi $} (8.2,0.2); 
	\draw[double,->] (8.2,1.2) --node[right]{\footnotesize $\lambda $} (8.2,1.8); 
	\draw[->] (2,-1) -- (13,-4.5); \draw (10,-3.1) node{\scriptsize $W$};
	\draw[->] (13,-6) -- (1,-2); \draw (5.2,-3.9) node{\scriptsize $U_{0}$};
	\draw (7.8,-3.5) node{$\dashv$};
	\draw[->] (14.5,-3.5) -- (14.5,-0.5); \draw (13.5,-2) node{\footnotesize $\mathrm{Spec}$};
	\draw[->] (15.5,-3.5) -- (15.5,-0.5); \draw (18,-2) node{\footnotesize $\mathrm{NC}_{0}=\mathrm{NC}\circ U_{0}$};
	\draw (16.7,-5.3) rectangle (24.85,-3.5);
	\draw (20.7,-4) node{\scriptsize $\implies$ : natural transformation};
	\draw (17.5,-4.8) node{\scriptsize $\dashv $}; \draw (20.55,-4.8) node{\scriptsize : is left adjoint of};
	\draw[->] (0,12.5) -- (0,2.5); \draw (0.4,8.5) node{\scriptsize $\subseteq$};
	\draw[->] (1.5,3) -- (4.5,5.8); \draw (3.8,4.5) node{\scriptsize $V$};
	\draw[->] (0.5,3.2) -- (4.2,6.9); \draw (2.4,5.8) node{\scriptsize $\chi$};
	\draw[double,->] (2.7,5.2) -- (3.1,4.8); \draw (3.3,5.4) node{\scriptsize $j$};
	\draw[->] (2,3) -- (13.2,6.5); \draw (8.5,5.4) node{\scriptsize $V$};
	\draw[->] (3,2.5) -- (13.5,5.8); \draw (9,3.9) node{\scriptsize $Q$};
	\draw[double,->] (8.6,4.9) -- (8.8,4.4); \draw (9.2,4.9) node{\scriptsize $\theta$};
	\draw[->] (15,5.5) -- (15,2.5); \draw (15.9,3.8) node{\footnotesize $\mathrm{Spec}$};
	\draw[->] (13,7) --node[above]{\footnotesize $\subseteq$} (8,7);
    \draw[->] (3,15) --node[above]{\footnotesize $\eta$} (18,15);
    \draw[->] (3,14) --node[below]{\footnotesize $\mathrm{NC}_{S}$} (18,14);
    \draw[double,->] (10.5,14.8) --node[right]{\scriptsize $\subseteq$} (10.5,14.2);
    \draw (16.5,0) -- (21,0);
    \draw[->] (21,0) --node[right]{\scriptsize $\subseteq$} (21,12.5);
    \draw[dashed] (3,13.1) -- (19,9) --node[right]{\footnotesize $\eta ?$} (19,1);
    \draw[dashed][->] (19,1) -- (16.5,1);
    \draw (2.5,12.7) -- (18,8.5) --node[left]{\footnotesize $\mathrm{NC}_{S}$} (18,2);
    \draw[->] (18,2) -- (16.5,2);
    \draw[->] (1,12) --node[below]{\footnotesize $\psi _{S}$} (13.3,8.2);
\end{tikzpicture}
\end{figure}

\end{document}